\newtheorem{theorem}{Theorem}[section]
\newtheorem{coro}{Corollary}
\newtheorem{lemma}[theorem]{Lemma}
\theoremstyle{definition}
\newtheorem{rk}{Remark}
\newcommand{\N}{\mbox{$\mathbb{N}$}}
\newcommand{\Z}{\mbox{$Z\!\!\!Z$}}
\newcommand{\R}{\mbox{$I\!\!R$}}
\numberwithin{equation}{section}
\begin{document}

\title[ Sensitivity of semigroups ]{ Shadowing Property for the free group acting in the circle}

\author[Jorge Iglesias and Aldo Portela]{}

\subjclass{Primary: 37B05; Secondary:  37C85 (37C50 37D20).}
 \keywords{Group acting,shadowing, minimality.}

%
\email{jorgei@fing.edu.uy }

\email{aldo@fing.edu.uy }

\maketitle

\centerline{\scshape  Jorge Iglesias$^*$ and Aldo Portela$^*$}
\medskip
{\footnotesize
 \centerline{Universidad de La Rep\'ublica. Facultad de Ingenieria. IMERL}
   \centerline{ Julio Herrera y Reissig 565. C.P. 11300}
   \centerline{ Montevideo, Uruguay}}

\bigskip

 \centerline{(Communicated by )}

\begin{abstract}
For the free group $F_2$ acting in $S^{1}$, we will prove that if the minimal set for the action is not a Cantor set, then the action does not have the shadowing property.
We will also construct an example, whose minimal set is a Cantor set, that it has the shadowing property.
\end{abstract}

\section{Introduction.}
The concepts of pseudo-orbits and shadowing property for homeomorphisms were widely investigated by many authors.
Many results are known that link expansivity and hyperbolicity, with the shadowing property. In \cite{p} it is find a survey of the most important results. In \cite{ot} this concept was generalized for finitely generated groups acting in a metric space $X$. In the said article, conditions are given that imply that an action does not have the shadowing property and examples with the shadowing property are constructed.  It is also conjectured that the action of a free group of finitely generators, on a manifold $M$, can not have the shadowing property.
When the manifold is $S^{1}$, the minimal sets are classified, being all $S^{1}$, a finite set or a Cantor set. The objective of this paper is to show that when the minimal set is all $S^{1}$ or a finite set, the action can not have shadowing. We will construct an example (whose minimal set is a Cantor set) that has the shadowing property; proving that what was conjectured is false.

\subsection{Basic definition.}
Given a group $G$ and a set $X$,  a dynamical system is formally define as a triplet $(G,X,\Phi )$, where $\Phi:G \times X \to X$ is a continuous function with $\Phi (  g_1, \Phi(g_2 ,x))= \Phi(g_1 g_2 ,x)$ for all $g_1,g_2 \in G$ and for all $x \in X$. The map $\Phi$ is called an action of $G$ on $X$. Without loss of generality it is possible to associate each element of $G$ to a homeomorphism $\Phi_g :X \to X$.
For every $x \in X$ we define the orbit of $x$ as $O(x)=\{\Phi_g(x): \ g\in G\} $.\\

A group $G$ is finitely generated if there exists a finite set $S\subset G$ such that for any $g\in G$ there exist $s_1,..., s_n\in S$ with $g=s_1.\cdots . s_n$.
The set $S$ is called finite generator of $G$. If $S$ is a finite generator of $G$ and for all $s\in S$ we have that $s^{-1}\in S$, then the set $S$ is called a finite symmetric generator.\\
For usual dynamical systems, this is when the group is  $\Z$ and the action is $\Phi (x,n)=f^{n}(x)$, we say that a sequence $\{x_n\}$ is
$\delta$-pseudotrajectory if $$d(f(x_n), x_{n+1})<\delta ,  \ \ \forall n\in Z.$$ It is possible to generalize this concept to a dynamical system $(G,X,\Phi )$, as follows:
A $G$-sequence in $X$ is a function $F:G\to X$. We denote this function by $\{ x_g  \} $ where $F(g)=x_g$.
Let $S$ be a finite symmetric generator of $G$. For $\delta >0$ we say that a $G$-sequence $\{ x_g  \} $  is a $\delta$-pseudotrajectory
 if
$$d(\Phi_s(x_g), x_{sg})<\delta ,\ \  \forall g\in G \mbox{ and } \forall s\in S.$$

Given a dynamical system $(G,X,\Phi)$, $\Phi$  has the shadowing
property if for any $\varepsilon > 0$ there exists $\delta > 0$ such that
for any $\delta$-pseudotrajectory $\{ x_g  \} $  there exists a point $y \in X$ with
$$d(x_g, \Phi_g(y)) < \varepsilon, \ \forall g \in G.$$

A finite symmetric generator $S$ of $G$ is uniformly continuous if for every $\varepsilon >$ 0 there exists
$\delta  > 0$ such that $d(x, y) < \delta$ implies $d(\Phi_s (x),\Phi_s (y))<\varepsilon $ for every $s\in S$.
In {\cite[Proposition 1]{ot}} it its proved that the shadowing property does not depend on the finite symmetric generator $S$ when is uniformly continuous.\\

Given a dynamical systems $(G,X,\Phi)$, we say that a map $\Phi_g$ is expansive if there exists $\alpha  >0$ such that if

$$d(\Phi_{g^{n}}(x), \Phi_{g^{n}}(y)) < \alpha , \ \forall n \in \Z   \mbox{  then } x=y  .$$

In  {\cite[Theorem 4]{ot}} it was proved that:

\begin{theorem}\label{teoot}

Let $(G,X,\Phi)$ be a dynamical system where $G$ be a finitely generated free group with at least two generators,
 $\Phi$ is uniformly continuous and $X$ is a non-discrete
metric space.
\begin{enumerate}
\item If for some $g \in  G$ the map $\Phi_g$ is expansive, then $\Phi$ does not have
shadowing.
\item If for some $g \in G$, $g \neq e$, the map $\Phi_g$ does not have shadowing, then $\Phi$
does not have shadowing either.
\end{enumerate}
\end{theorem}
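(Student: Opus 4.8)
The plan is to prove both parts by exploiting the tree structure of the Cayley graph of $G$. If $S$ is a free symmetric generating set, then $\mathcal{T}:=\mathrm{Cay}(G,S)$ is a tree (with edges $\{h,sh\}$, $s\in S$), and by \cite[Proposition 1]{ot} we may assume $S$ is of this form, so that a $\delta$-pseudotrajectory of $\Phi$ is precisely a $G$-sequence whose ``errors'' $d(\Phi_s(x_h),x_{sh})$ live on the edges of $\mathcal{T}$. Two reductions are used in both parts. First, as $\Phi$ is uniformly continuous, each $\Phi_w$ is a uniform homeomorphism (a finite composition of the $\Phi_s$ and their inverses), so shadowing and expansivity of a single $\Phi_h$ are invariant under replacing $h$ by a conjugate; hence, whenever one element matters, I may assume it is a cyclically reduced word $g$, so that $\{g^n:n\in\Z\}$ lies on a bi-infinite geodesic $\ell\subset\mathcal{T}$. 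Second, the pseudo-orbit estimates below follow by feeding the given $\ep$ through uniform continuity finitely often, and I will not spell this out. The one recurring construction is \emph{grafting}: a $G$-sequence defined on a subtree $\mathcal{S}\subset\mathcal{T}$ extends to all of $\mathcal{T}$ by $x_{sv}:=\Phi_s(x_v)$ along each edge pointing away from $\mathcal{S}$; since $\mathcal{T}$ is a tree this is consistent and creates \emph{no} error on any edge outside $\mathcal{S}$.

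For statement (2) I would argue by contraposition: assuming $\Phi$ has shadowing, I show that for every $g\ne e$ the $\Z$-action generated by $\Phi_g$ (with the uniformly continuous generator $\{g,g^{-1}\}$) has shadowing. After conjugating, take $g$ cyclically reduced with geodesic $\ell$. Given $\ep>0$, let $\delta>0$ be the shadowing constant of $\Phi$, and let $\{y_n\}$ be a $\delta_0$-pseudotrajectory of $\Phi_g$ with $\delta_0>0$ small. Set $x_{g^n}:=y_n$, and on the length-$|g|$ geodesic of $\ell$ from $g^n$ to $g^{n+1}$ define the intermediate values by applying the letters of $g$ to $y_n$ in turn, so that on $\ell$ every error is $0$ except on the last edge of each segment, where it equals $d(\Phi_g(y_n),y_{n+1})<\delta_0$; then graft onto the rest of $\mathcal{T}$. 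For a suitable $\delta_0$ this $\{x_h\}$ is a $\delta$-pseudotrajectory of $\Phi$, so some $z$ satisfies $d(x_h,\Phi_h(z))<\ep$ for all $h$; at $h=g^n$ this reads $d(y_n,\Phi_{g^n}(z))<\ep$, i.e.\ $z$ $\ep$-shadows $\{y_n\}$.

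For statement (1) I would argue by contradiction. Suppose $\Phi_g$ is expansive with constant $\alpha$ and $\Phi$ has shadowing; conjugate so $g$ is cyclically reduced with geodesic $\ell$, put $\ep=\alpha/2$, and let $\delta$ be the corresponding shadowing constant. The key geometric input is a right-translate $\ell h_0:=\{vh_0:v\in\ell\}$ that is \emph{disjoint} from $\ell$; such an $h_0$ exists because $\ell h_0$ meets $\ell$ only when $h_0$ lies in the set of ``ratios'' of vertices of $\ell$, which has at most polynomial growth, whereas $G$ --- being free of rank $\ge 2$ --- has exponential growth. Then $\ell$ and $\ell h_0$ are geodesic lines joined in $\mathcal{T}$ by a unique finite bridge. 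Using non-discreteness of $X$, pick an accumulation point $p$ and a point $q\ne\Phi_{h_0}(p)$ with $d(q,\Phi_{h_0}(p))$ small. Define the $G$-sequence to be $h\mapsto\Phi_h(p)$ on $\ell$ and on the bridge except at its endpoint in $\ell h_0$, to be $h\mapsto\Phi_h(\Phi_{h_0^{-1}}(q))$ on $\ell h_0$ (so $x_{g^nh_0}=\Phi_{g^n}(q)$ for all $n$), and to be grafted elsewhere; by construction every error is $0$ except on the single bridge edge meeting $\ell h_0$, where it is $<\delta$. If $z$ $\ep$-shadows $\{x_h\}$, then $d(\Phi_{g^n}(p),\Phi_{g^n}(z))<\alpha$ for all $n$ forces $z=p$ by expansivity of $\Phi_g$, while $d(\Phi_{g^n}(q),\Phi_{g^n}(\Phi_{h_0}(z)))<\alpha$ for all $n$ forces $\Phi_{h_0}(z)=q$; hence $q=\Phi_{h_0}(p)$, contradicting the choice of $q$.

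The genuine obstacle, I expect, is conceptual and lies in (1): expansivity of one homeomorphism does not by itself preclude shadowing (shifts of finite type are expansive and have shadowing), so the argument must really use that $G$ is free of rank $\ge 2$ --- concretely, the existence of a geodesic line $\ell h_0$ disjoint from the axis $\ell$ and joined to it by a short bridge, along which one slips an arbitrarily small yet nonzero discrepancy between an ``orbit of $p$'' and an ``orbit of $q$'' that a shadowing point would have to reconcile with the expansivity of $\Phi_g$. Everything else is routine: invariance of shadowing and expansivity under conjugation by the uniform homeomorphisms $\Phi_w$; the uniform-continuity bookkeeping relating the various $\delta$'s and passing between $\Phi_s$ and $\Phi_{s^{-1}}$; and checking that grafting genuinely produces zero error off the chosen subtree --- the last being exactly where treeness of $\mathcal{T}$ is indispensable.
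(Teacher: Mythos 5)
This theorem is quoted from Osipov--Tikhomirov \cite{ot} and the paper contains no proof of it, so the only internal points of comparison are the closely related arguments in Section 3. Your reconstruction is correct and follows essentially the standard route: part (2) by grafting a pseudotrajectory of $\Phi_g$, laid along the axis of a cyclically reduced conjugate, onto the rest of the Cayley tree with zero error off that axis; part (1) by planting the $\Phi_g$-orbits of two distinct nearby points along two disjoint translates of the axis joined by one small-error bridge edge, so that expansivity forces a single shadowing point to equal both --- which is exactly the mechanism the paper itself employs in Remark \ref{rk3} and Lemma \ref{lema_general}.
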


 We say that a point $N\in X$ is an expansive point for $g\in G$ if there exists $\alpha >0$ such that for any $\delta >0$ if $0<d(y,N)<\delta$, then there exists $n\in \Z$ such $d(\Phi_{g^{n}}(y), \Phi_{g^{n}}(N))>\alpha$. The number $\alpha$ is called expansivity constant.
In this paper we will prove the following result which in some cases is a generalization of the item 1 above.\\
{\bf{ Lemma.}}{\it{ Let $(G,X,\Phi)$ be a dynamical system where $G$ be a finitely generated free group with at least two generators, $\Phi$ is uniformly continuous, $X$ is a metric space and $N$ is a non isolated  expansive point for $\Phi_g$ for some $g\in G$. If there exists a connected and invariant set $M$ for the action $\Phi$ with $M\subset \overline{O(N)}$, then $\Phi$ does not have the shadowing property.}}\\

A non-empty set $M$, $M \subset X$, is minimal if $\overline{O(x)}=M$ for any $x \in M$.
When $X=S^{1}$ we have the following result (see for example \cite{n}):\\
If $M\subset S^{1}$ is a minimal set, then
one of these three possibilities occurs:
\begin{enumerate}
\item is a finite orbit of $\Phi$,
\item is $ S^1$,
\item  is a Cantor set and is the unique minimal set for $\Phi$.
\end{enumerate}
Now we consider the free group of two generators that we will denote by $F_2$.

Let us state our main results:\\
{\bf{Theorem A}}
{\it{ Let $(F_2,S^{1},\Phi)$ be a dynamical system. If M is a minimal set which is not a Cantor set, then $\Phi$ does not have the shadowing property.}}

When $M$ is a Cantor set we construct an example which it has the shadowing property. This example is easily generalizable to $S^{n}$.

\section{Construction of the example}

In this section, we considerer the free group $F_2$ with finite symmetric generator $S=\{a,a^{-1},b,b^{-1}\}$.
We are going to construct an action $\Phi$ in $S^{1}$ whose minimal set $K$ is a Cantor and has shadowing property. The generator of the action will be $\Phi_a $ and $\Phi_b$ where $\Phi_a, \Phi_b :S^{1}\to S^{1}$ have the following properties (see figure \ref{figura101}):\\
\begin{enumerate}
\item   $\Phi_a, \Phi_b$ are north-south pole homeomorphisms, with $\Omega (\Phi_a)=\{ N_a,S_a  \}$, $\Omega (\Phi_b)=\{ N_b,S_b  \}$.\\
\item For $\Phi_a$ there exist two open balls $B_{N_{a}}=B(N_{a},r_1)$ y $B_{S_{a}}=B(S_{a},r_2)$ such that:
\begin{itemize}
\item   $\overline{B_{N_{a}}}\cap \overline{  B_{S_{a}}}=\emptyset$.\\
\item  $||\Phi_a(x)-\Phi_a(y)||>2|| x-y   ||$ for all $x,y\in B_{N_{a}} $ and\\ $||\Phi_a(x)-\Phi_a(y)||<1/2|| x-y   ||$ for all $x,y\in \overline{B_{N_{a}}}^{c}$.\\

\item $\Phi_a (\partial B_{N_{a}})\subset B_{S_{a}}$.
\end{itemize}

\item For $\Phi_b$ there exist two balls  $B_{N_{b}}=B(N_{b},r_3)$ y $B_{N_{b}}=B(S_{b},r_4)$  with the same properties given in item 2, and with the additional condition

$\overline {B_{N_{b}}  \cup B_{S_{b}}} \subset ( \overline {B_{N_{a}}  \cup B_{S_{a}}})^{c}.$\\

\end{enumerate}

\begin{figure}[ht]
\begin{center}
\caption{\label{figura101}}
\psfrag{phia}{$\Phi_a$}
\psfrag{bna}{$B_{N_a}$}
\psfrag{sa}{$S_a$}
\psfrag{ib}{$I_b$}
\psfrag{ibb}{$I_{b^{-1}}$}
\psfrag{na}{$N_a$}

\includegraphics[scale=.2]{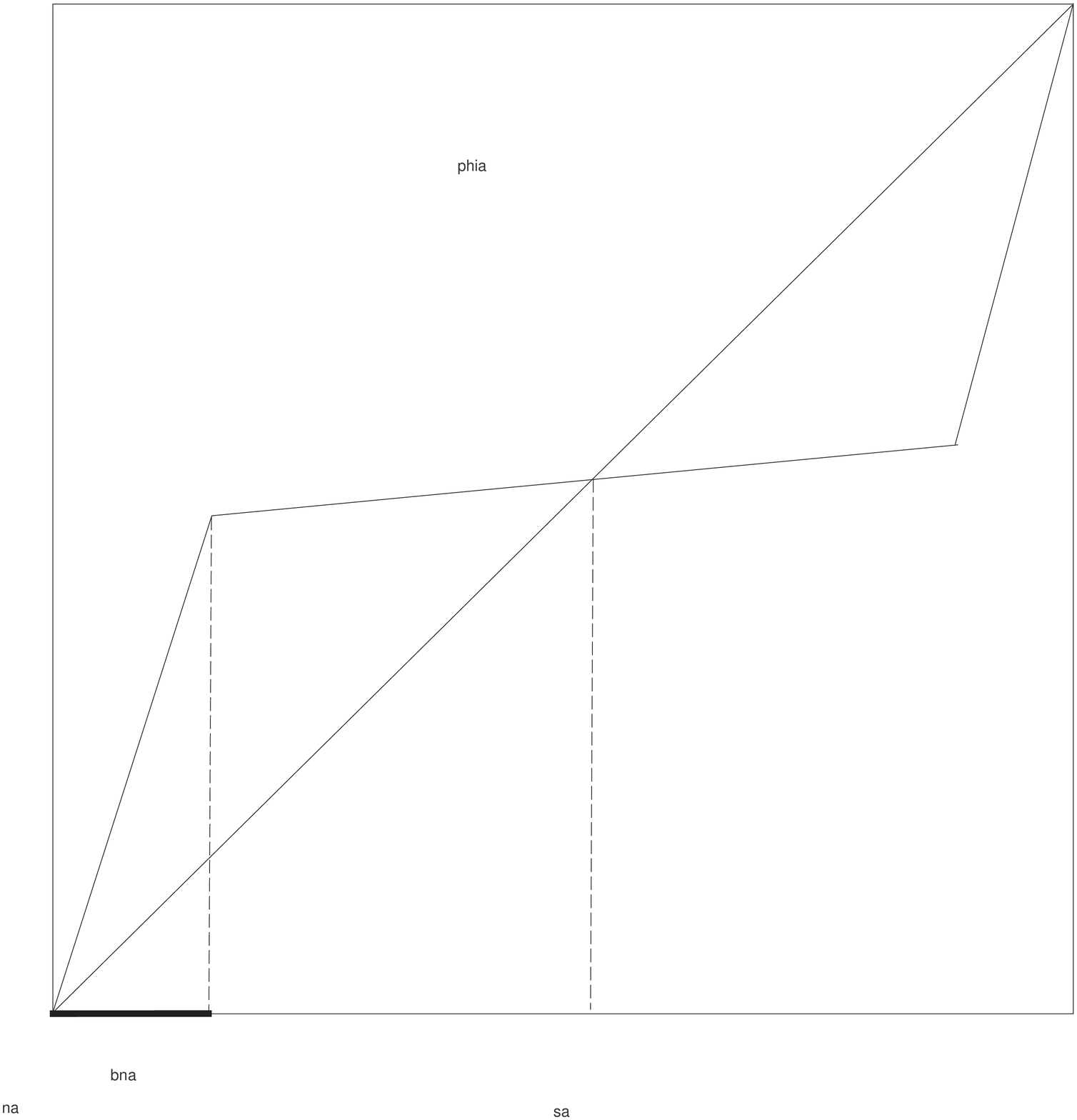}
\end{center}
\end{figure}

Let $I_a=(\Phi_a( B_{N_{a}}  ))^{c}$, $I_{a^{-1}}= (\Phi^{-1}_a( B_{S_{a}}  ))^{c}$, $I_b=\Phi_b(( B_{N_{b}}  ))^{c}$  and  $I_{b^{-1}}=(\Phi^{-1}_b( B_{S_{b}}  ))^{c} $


 The following properties are very useful for our purpose. Since they are not hard to prove we omit its proof.

\begin{rk}\label{rk1}
\begin{enumerate}
\item If $s,s^{'}\in \{ a,a^{-1},b,b^{-1}\}$ then $$||\Phi_s(x)-\Phi_s(y)||<1/2|| x-y   || \ \forall x,y\in I_{s^{'}} \mbox{ with } s^{'}\neq s^{-1}.$$
\item If $s\neq s{^{'}{^{-1}}}$ then $\Phi_s (I_{s^{'}})\subset I_s$.
\item $\Phi_s(I_s)\cap I_{s^{-1}}=\emptyset$ for all $s\in \{a,b,a^{-1},b^{-1}\}$.\\
\end{enumerate}
\end{rk}

Let $\{A_n\}$ be such that
 $$A_0=I_a\cup I_{a^{-1}}\cup I_b\cup I_{b^{-1}} \mbox{ and }$$ $$  A_{n+1}= \left[ \Phi_a(A_n)\cap \Phi_{a^{-1}}(A_n)\cap \Phi_b(A_n)\cap \Phi_{b^{-1}}(A_n)   \right] \cap A_n.$$
 Note that (see figure \ref{figura1})

 \begin{itemize}
\item For any $n\in\N$, $A_n$ has $4.3^{n}$ connected components and $A_{n+1}\subset int(A_n)$.
\item The lengths of the connected components fo $A_n$ goes to zero when $n$ goes to infinity.
\item The Cantor set  $K=\bigcap_{n\geq 1}A_n$ is a minimal for the action $\Phi$ generated for $\Phi_a$ y $\Phi_b$ (see \cite{n}).
\end{itemize}

Some of our proofs are by induction in the length of the elements $ g\in G $. Thus we need to define the length of an element $g\in G $.

The elements of length one are $ a,a^{-1},b$ and $b^{-1}$. The elements of length $n$ are obtained from the elements of length $n-1$ as follows: Let $g=s_{1}....s_{n-1}$ be an element of lengths $n-1$ with $s_j\in \{ a,a^{-1},b,b^{-1}\}$. Then the element of length $n$ generated by $g$ are $g^{'}=s.g$ with $s\neq  (s_{1})^{-1}$. The length of $g$ is denoted by $|g|$.

It is clear that an element $g \in G $ can be written from $ S $ in different ways, for example $ g = gaa ^ {- 1} $.
Note that if $g=s_{{1}}....s_{{n}}$ with $s_j\in \{ a,a^{-1},b,b^{-1}\}$, then  $|g|\leq n$. We say that $g=s_{{1}}....s_{{n}}$ written in its normal form if $|g|= n$. It is easy to prove that the normal representation is unique.

From now we will consider $g\in G$ written in its normal form.

\begin{lemma}\label{lema_induccion}
Let $ x\notin A_0$. If $n\geq 2$, then for any $g\in G$ with $2\leq |g|\leq n$ hold:
\begin{itemize}
\item $\Phi_g (x) \in A_0$
\item If $\Phi_g (x) \in I_{s}$ with $s\in \{ a,a^{-1},b,b^{-1}\}$ and $g=s_{1}\cdots s_{n}$, then $s_{1}=s$.
\end{itemize}
  \end{lemma}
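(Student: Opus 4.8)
The plan is to argue by induction on $n$, the base case $n=2$ and the inductive step being handled in essentially the same way using Remark~\ref{rk1}. First I would fix $x \notin A_0$ and an element $g = s_1 \cdots s_m$ in normal form with $2 \le m \le n$; write $g = s_1 h$ where $h = s_2 \cdots s_m$ has length $m-1 \ge 1$ and, crucially, $s_1 \ne s_2^{-1}$ since $g$ is in normal form. The key geometric input is Remark~\ref{rk1}(2): if $s \ne (s')^{-1}$ then $\Phi_s(I_{s'}) \subset I_s$. So once I know $\Phi_h(x) \in I_{s_2}$, applying $\Phi_{s_1}$ (and using $s_1 \ne s_2^{-1}$) gives $\Phi_g(x) = \Phi_{s_1}(\Phi_h(x)) \in I_{s_1} \subset A_0$, which simultaneously proves both bullet points, because if $\Phi_g(x) \in I_s$ then $I_s \cap I_{s_1} \ne \emptyset$; since the $I_s$ for distinct $s$ are pairwise disjoint (this should be checked from the definitions of $I_a, I_{a^{-1}}, I_b, I_{b^{-1}}$, or follows from the structure of $A_0$ having four components), we conclude $s = s_1$.

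So the whole argument reduces to the claim: for $w = s_1 \cdots s_k$ in normal form with $k \ge 1$ and $x \notin A_0$, we have $\Phi_w(x) \in I_{s_1}$. I would prove this by induction on $k$. For $k = 1$: $w = s_1$, and I need $\Phi_{s_1}(x) \in I_{s_1}$ for $x \notin A_0$. This is where the definitions of the $I_s$ come in: e.g. $I_a = (\Phi_a(B_{N_a}))^c$, and since $x \notin A_0 \supset I_{a^{-1}} = (\Phi_a^{-1}(B_{S_a}))^c$, we get $x \in \Phi_a^{-1}(B_{S_a})$, i.e. $\Phi_a(x) \in B_{S_a}$; then one checks $B_{S_a} \subset I_a = (\Phi_a(B_{N_a}))^c$, which holds because $\Phi_a(B_{N_a})$ is a neighborhood of $N_a$ disjoint from $B_{S_a}$ (using $\overline{B_{N_a}} \cap \overline{B_{S_a}} = \emptyset$ and $\Phi_a(\partial B_{N_a}) \subset B_{S_a}$, so $\Phi_a(B_{N_a})$ is the component of $S^1 \setminus \Phi_a(\partial B_{N_a})$ around $N_a$). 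The cases $s_1 \in \{a^{-1}, b, b^{-1}\}$ are symmetric. For the inductive step $k \to k+1$: write $w = s_1 s_2 \cdots s_{k+1}$, let $w' = s_2 \cdots s_{k+1}$ which is in normal form of length $k \ge 1$, so by the inductive hypothesis $\Phi_{w'}(x) \in I_{s_2}$; since $w$ is in normal form, $s_1 \ne s_2^{-1}$, so Remark~\ref{rk1}(2) gives $\Phi_w(x) = \Phi_{s_1}(\Phi_{w'}(x)) \in \Phi_{s_1}(I_{s_2}) \subset I_{s_1}$.

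The main obstacle, such as it is, is really bookkeeping rather than mathematics: one must be careful that the hypothesis $x \notin A_0$ is only used at the very first step (to launch the length-one base case), and that thereafter everything is driven by Remark~\ref{rk1}(2) together with the pairwise disjointness of the four sets $I_a, I_{a^{-1}}, I_b, I_{b^{-1}}$. I should also double-check the edge case where $g$ has length exactly $2$, to make sure the decomposition $g = s_1 h$ with $|h| = 1$ correctly feeds into the length-one base case of the auxiliary claim. A minor point worth stating explicitly: the pairwise disjointness of the $I_s$ — needed so that "$\Phi_g(x) \in I_s$ and $\Phi_g(x) \in I_{s_1}$" forces $s = s_1$ — follows from item~3 in the construction of $\Phi_b$ (the supports of the two homeomorphisms are separated) together with $\overline{B_{N_a}} \cap \overline{B_{S_a}} = \emptyset$; this is presumably among the "not hard to prove" facts the authors have in mind, so I would just cite Remark~\ref{rk1} or the construction for it.
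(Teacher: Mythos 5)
Your inductive step and your use of Remark~\ref{rk1}(2) (together with the pairwise disjointness of the four arcs $I_s$ to identify $s=s_1$) match the paper's argument. But the auxiliary claim you reduce everything to --- that $\Phi_w(x)\in I_{s_1}$ for every reduced word $w=s_1\cdots s_k$ with $k\ge 1$ and every $x\notin A_0$ --- is false at $k=1$, and your justification of that base case contains a concrete error. You assert that $B_{S_a}\subset I_a$ because ``$\Phi_a(B_{N_a})$ is a neighborhood of $N_a$ disjoint from $B_{S_a}$.'' It is not disjoint: the construction stipulates $\Phi_a(\partial B_{N_a})\subset B_{S_a}$, so the two endpoints of the open arc $\Phi_a(B_{N_a})$ lie in the open ball $B_{S_a}$, and hence $\Phi_a(B_{N_a})\cap B_{S_a}\neq\emptyset$. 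Consequently $B_{S_a}\not\subset(\Phi_a(B_{N_a}))^c=I_a$. Worse, the $k=1$ statement itself fails: a point $x\in B_{N_a}\cap\Phi_a^{-1}(B_{S_a})$ lies outside $A_0$ (it avoids $I_{a^{-1}}=(\Phi_a^{-1}(B_{S_a}))^c$ and the other three arcs sit in balls disjoint from $B_{N_a}$), yet $\Phi_a(x)\in\Phi_a(B_{N_a})\cap B_{S_a}$, which is disjoint from $A_0$. This is precisely why the lemma is stated only for $|g|\ge 2$ and why Corollary~\ref{lema_adentro} allows \emph{two} orbit points outside $A_0$; your scheme would wrongly prove the conclusion already at $|g|=1$.

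The repair is to weaken the $k=1$ statement to what is actually true: from $x\notin I_{s^{-1}}$ one only gets $\Phi_s(x)\in B_{S_s}$ (the attracting ball of $\Phi_s$), not $\Phi_s(x)\in I_s$. One then checks directly that $\Phi_{s'}(B_{S_s})\subset I_{s'}$ for every $s'\neq s^{-1}$ (since $B_{S_s}$ is disjoint from the expanding ball of $\Phi_{s'}$), which establishes the lemma for $|g|=2$ by a finite case analysis --- this is the paper's base case (``$|g|=2$ or $3$, analyzing all possible cases''). From $|g|=2$ onward your inductive step, which only invokes Remark~\ref{rk1}(2) and the normal-form condition $s_1\neq s_2^{-1}$, goes through unchanged. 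So the skeleton of your proof is the right one, but as written the induction is launched from a false statement and does not establish the case $|g|=2$.
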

\begin{proof}
The proof is by induction in the length of $g$.\\
If $|g|=2$ or 3, analyzing all possible cases for $g$ the thesis is fulfilled.\\
Suppose that the statement is valid for all $g\in G$ with $|g|\leq n$. Let $g^{'}\in G$ be with $|g^{'}|=n+1$.
Thus $g^{'}=sg$ with $s\in \{ a,a^{-1},b,b^{-1}\}$ and $|g|=n$.

Since $|g|=n$, by the assumptions, $\Phi_{g} (x) \in A_0$. If $\Phi_{g} (x) \in I_a$ (the other cases are analogous), then by the second part of assumptions, we obtain that  $g=ag^{''}$. Thus $s\neq a^{-1}$, hence $\Phi_s\neq \Phi_{a^{-1}}$. Since $\Phi_{g} (x) \in I_a$ and $s\in \{ a,b,b^{-1}\}$, by Remark \ref{rk1} item 2), $\Phi_s\Phi_{g} (x)=\Phi_{g^{'}} (x)\in I_s$ and the thesis is verified.\\
\end{proof}

\begin{coro}\label{lema_adentro}
 For any $x\in S^{1}$ we have that  $\sharp O(x)\cap A_0^{c}\leq 2$.
\end{coro}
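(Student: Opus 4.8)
\textbf{Proof proposal for Corollary \ref{lema_adentro}.}

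The plan is to distinguish the two cases $x \notin A_0$ and $x \in A_0$, and in each case apply Lemma \ref{lema_induccion} together with the structural properties in Remark \ref{rk1}. The key point is to show that the orbit of a point can leave $A_0$ at most twice, and that the elements $g$ realizing $\Phi_g(x) \notin A_0$ are severely constrained in their normal form.

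First suppose $x \notin A_0$. By Lemma \ref{lema_induccion}, for every $g \in G$ with $|g| \geq 2$ we have $\Phi_g(x) \in A_0$. Hence the only group elements $g$ for which $\Phi_g(x)$ can possibly lie in $A_0^{c}$ are those with $|g| \leq 1$, namely $g = e$ and the four generators $a, a^{-1}, b, b^{-1}$. Now $\Phi_e(x) = x \notin A_0$, so that already accounts for one point of $O(x) \cap A_0^{c}$. Among the generators: if $\Phi_a(x) \notin A_0$, I would like to conclude that $\Phi_{a^{-1}}(x), \Phi_b(x), \Phi_{b^{-1}}(x)$ all lie in $A_0$, and similarly with the roles permuted, so that at most one of the four images under generators escapes $A_0$. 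This should follow from the definition of the sets $I_s$ and of $A_0 = I_a \cup I_{a^{-1}} \cup I_b \cup I_{b^{-1}}$: since $x \notin A_0$ means $x$ is in none of the four $I_s$, and since (by the disjointness/nesting built into the construction, e.g. $\Phi_s(\partial B_{N_s}) \subset B_{S_s}$ and the ball configuration in items 2--3) applying a single expanding-then-contracting generator $\Phi_s$ sends the complement of $A_0$ into $I_s \subset A_0$ \emph{unless} the point is in the one small region where $\Phi_s$ has not yet mapped things inside; but that exceptional region for $\Phi_s$ is disjoint from the corresponding ones for the other three generators because the balls $B_{N_a}, B_{S_a}, B_{N_b}, B_{S_b}$ are pairwise disjoint. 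Hence at most one generator image escapes, giving $\sharp\, O(x) \cap A_0^{c} \leq 1 + 1 = 2$.

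Next suppose $x \in A_0$. Then $x \in I_{s}$ for some (unique, since the four $I_s$ can be taken to overlap only outside the relevant region — or if they do overlap one treats the finitely many cases directly) generator $s$. Write any $g \in G$ in normal form $g = s_1 \cdots s_k$. If $k = |g| \geq 2$ and $s_k \neq s^{-1}$, then by Remark \ref{rk1}(2) applied repeatedly (each letter $s_j$ with $s_j \neq s_{j+1}^{-1}$ keeps us inside some $I_{s_j} \subset A_0$) we get $\Phi_g(x) \in A_0$; and if $s_k = s^{-1}$, then $\Phi_{s_k}(x) = \Phi_{s^{-1}}(x)$ — one should check this single point, and then the remaining letters $s_{k-1}, \ldots, s_1$ keep us inside $A_0$ by Remark \ref{rk1}(2) again as soon as $\Phi_{s^{-1}}(x) \in A_0$. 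So the only candidates for $\Phi_g(x) \notin A_0$ are $g = e$ (giving $x$ itself, which is in $A_0$ — contributing nothing) and $g$ a single generator, or possibly $g = s^{-1}$ composed with the reduction above. Carefully: $\Phi_g(x) \in A_0^{c}$ forces $g$ to have length $\leq 1$, i.e. $g \in \{e, a, a^{-1}, b, b^{-1}\}$; $\Phi_e(x) = x \in A_0$; and among the four generators the same disjointness-of-balls argument as in the first case shows at most two of $\Phi_a(x), \Phi_{a^{-1}}(x), \Phi_b(x), \Phi_{b^{-1}}(x)$ can lie in $A_0^{c}$ (in fact one expects at most one, but the bound $2$ is all that is claimed). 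This yields $\sharp\, O(x) \cap A_0^{c} \leq 2$.

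The main obstacle I anticipate is the bookkeeping in the case $x \in A_0$: one must be precise about what $\Phi_{s^{-1}}(x)$ does when $x \in I_s$ (this is the one step not directly covered by Remark \ref{rk1}(2), and it is where Remark \ref{rk1}(3), stating $\Phi_s(I_s) \cap I_{s^{-1}} = \emptyset$, is presumably meant to help control things), and to verify rigorously that the four "exceptional" regions where a single generator fails to push the complement of $A_0$ into $A_0$ are pairwise disjoint — this is exactly what the hypothesis $\overline{B_{N_b} \cup B_{S_b}} \subset (\overline{B_{N_a} \cup B_{S_a}})^{c}$ and the disjointness $\overline{B_{N_a}} \cap \overline{B_{S_a}} = \emptyset$ in items 2--3 of the construction are designed to guarantee. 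Once those disjointness facts are in hand, the counting is immediate and the bound $2$ (indeed the slack in "$\leq 2$" versus the sharper "$\leq 1$") leaves comfortable room for the finitely many boundary cases $|g| \in \{0,1\}$.
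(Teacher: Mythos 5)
Your first case ($x\notin A_0$) is essentially the paper's argument: Lemma \ref{lema_induccion} kills all $g$ with $|g|\ge 2$, and for the five remaining elements one checks (via the disjointness of $B_{N_a},B_{S_a},B_{N_b},B_{S_b}$: indeed $\Phi_s(x)\notin I_s$ forces $x$ to lie in the ball associated to $s$, and these balls are pairwise disjoint) that besides $x=\Phi_e(x)$ at most one generator image escapes $A_0$. That part is fine.

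The second case ($x\in A_0$) has a genuine gap, and it is exactly the one you flag at the end without resolving. The assertion that ``$\Phi_g(x)\in A_0^{c}$ forces $|g|\le 1$'' is false for $x\in A_0$. If $x\in I_s$ and $z:=\Phi_{s^{-1}}(x)\notin A_0$ (which Remark \ref{rk1} does not exclude --- item (3) controls $\Phi_{s^{-1}}(I_{s^{-1}})$, not $\Phi_{s^{-1}}(I_s)$), then Lemma \ref{lema_induccion} applied to $z$ only covers words of length $\ge 2$ \emph{in} $z$; a word $g=hs^{-1}$ with $|h|=1$, $h\neq s$, has $|g|=2$ and $\Phi_g(x)=\Phi_h(z)$ may well lie outside $A_0$. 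So your list of candidates in this case ($x$ and the four generator images of $x$) is wrong: three of those four are in $A_0$ automatically by Remark \ref{rk1}(2), while the genuine second escapee, when it exists, is the image of a length-two word. The bound $\le 2$ is still true, but your enumeration does not prove it. The paper avoids this entirely with a one-line reduction you are missing: if $O(x)\cap A_0^{c}=\emptyset$ there is nothing to prove, and otherwise pick any $z\in O(x)$ with $z\notin A_0$; since $O(z)=O(x)$, it suffices to count $O(z)\cap A_0^{c}$ with $z\notin A_0$ as base point, which is precisely your first case. Replacing your second case by this observation fixes the proof.
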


\begin{proof}
Suppose that there exists $z\in O(x)$ with $z\in A_0^{c}$. Thus, by lemma above, if $|g|\geq 2$, then $\Phi_g(z)\in A_0$.
When $|g|\leq 1$ analyzing the different cases it is easy to show that there are at most two point of $O(z)$ in $A_0^{c}$.
Thus, Since $O(z)=O(x)$ we obtain the thesis.
\end{proof}

\begin{figure}[h]
\psfrag{i0}{$I_{b^{-1}}$}

\psfrag{i1}{$I_{b}$}

\psfrag{j0}{$I_{a^{-1}}$}

\psfrag{j1}{$I_{a}$}

\psfrag{fi0}{$\Phi_a(I_{b^{-1}})$}

\psfrag{fi1}{$\Phi_a(I_{b})$}

\psfrag{fj0}{$\Phi_a(\partial I_{a^{-1}})$}

\psfrag{fj1}{$\Phi_a(I_{a})$}

\psfrag{gi0}{$\Phi_b(I_{a})$}

\psfrag{gi1}{$\Phi_b(I_{b})$}

\psfrag{gj0}{$\Phi_b(I_{a^{-1}})$}

\psfrag{gj1}{$\Phi_b(I_{b})$}

\psfrag{ggi0}{$\Phi_b^{-1}(I_{a})$}

\psfrag{ggi1}{$\Phi_b^{-1}(\partial I_{b})$}

\psfrag{ggj0}{$\Phi_b^{-1}(I_{a^{-1}})$}

\psfrag{ggj1}{$\Phi_b^{-1}(I_{b^{-1}})$}

\psfrag{fbi3}{$\Phi_b^{}(\partial I_{b^{-1}})$}

\psfrag{ffi0}{$\Phi_a^{-1}(\partial I_{b^{-1}})$}

\psfrag{ffi1}{$\Phi_a^{-1}(I_{b})$}

\psfrag{ffj0}{$\Phi_a^{-1}(I_{a^{-1}})$}

\psfrag{ffj1}{$\Phi_a^{-1}(I_{b^{-1}})$}

\psfrag{ffi0}{$\Phi_a^{-1}(I_{b^{-1}})$}

\psfrag{ffi1}{$\Phi_a^{-1}(I_{b})$}

\psfrag{ffj0}{$\Phi_a^{-1}(I_{a^{-1}})$}

\psfrag{ffj1}{$\Phi_a^{-1}(I_{b^{-1}})$}

\begin{center}
\caption{\label{figura1}}
\subfigure[]{\includegraphics[scale=0.27]{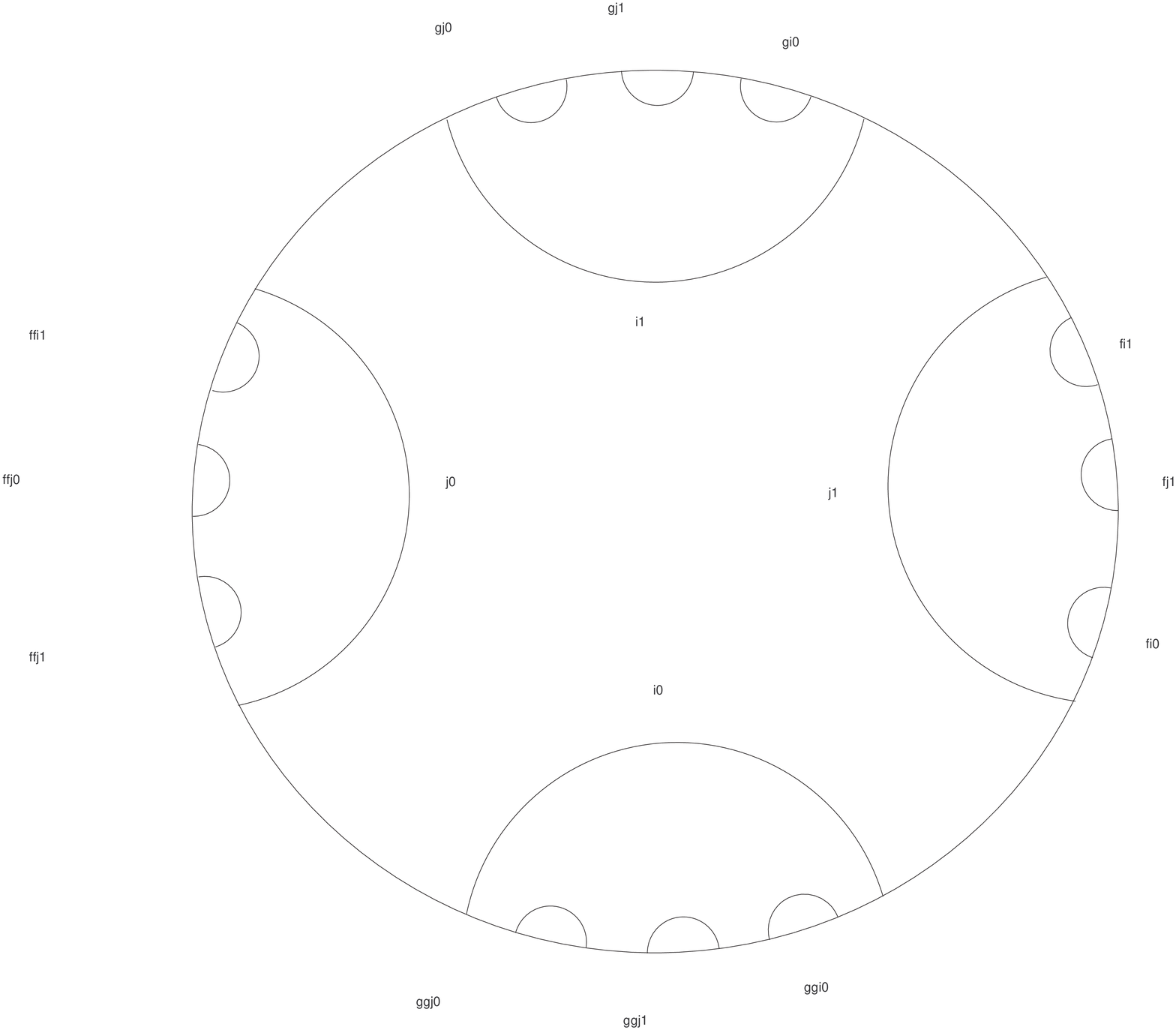}}
\end{center}
{ These figures correspond to example 1.}
\end{figure}

Now we will prove that this example has the shadowing property. We divide the proof in two cases. When the pseudotrajectory is contained in $A_0$ and when it is not contained in  $A_0$.

{\bf{Pseudotrajectory contained in $A_0$.}} Given a pseudotrajectory $\{x_g\}$, $\{x_g\}\subset A_0$,  for any $g\in G$ there exists $s\in \{a,a^{-1},b,b^{-1} \}$ such that  $x_g\in I_s$.

We want to find a point $y\in K$ such that $x_g\in I_s$  iff  $\Phi_g(y)\in I_s$ for all $g\in G$.

\begin{lemma}\label{l1}
 There exists $\delta_0>0$ such that for any $\delta$, with $0<\delta <\delta_0$, and $\{ x_g  \} $ a $\delta$-pseudotrajectory contained in $ A_0$, then:
\begin{enumerate}
  \item  If $x_g\in I_s$, then $x_{s^{-1}g}\notin I_{s^{-1}}$, for $s\in \{a,b,a^{-1},b^{-1}\}$.
  \item  If $\Phi_s(x_g)\in I_{s^{'}}, $ for $s,s^{'}\in \{a,b,a^{-1},b^{-1}\}$, then $x_{sg}\in I_{s^{'}}$.
\end{enumerate}
\end{lemma}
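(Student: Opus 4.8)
The plan is to fix $\delta_0$ small enough that several geometric separations survive $\delta$-perturbations, and then prove the two items essentially by unwinding the definitions together with Remark \ref{rk1}. For item (1): suppose $x_g \in I_s$. Since $\{x_g\}$ is a $\delta$-pseudotrajectory, $d(\Phi_s(x_{s^{-1}g}), x_g) < \delta$. By Remark \ref{rk1}(3) we have $\Phi_s(I_s) \cap I_{s^{-1}} = \emptyset$, and since the $I_{s'}$ are built from the $\Phi$-images of the disjoint closed balls, $\Phi_s(\overline{I_{s^{-1}}})$ and $\overline{I_s}$ are disjoint compact sets, hence at positive distance, say $3\eta_s > 0$. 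If we had $x_{s^{-1}g} \in I_{s^{-1}}$, then $\Phi_s(x_{s^{-1}g}) \in \Phi_s(I_{s^{-1}})$; but this set is far from $I_s \ni x_g$, contradicting $d(\Phi_s(x_{s^{-1}g}), x_g) < \delta$ once $\delta < \eta_s$. Taking $\delta_0$ below the minimum of these finitely many $\eta_s$ handles item (1).

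For item (2): assume $\Phi_s(x_g) \in I_{s'}$. Again $d(\Phi_s(x_g), x_{sg}) < \delta$, so $x_{sg}$ lies in the $\delta$-neighborhood of $I_{s'}$. The point of the argument is that $x_{sg}$ must itself lie in one of the four intervals $I_a, I_{a^{-1}}, I_b, I_{b^{-1}}$ (this is the hypothesis that the pseudotrajectory is contained in $A_0 = I_a \cup I_{a^{-1}} \cup I_b \cup I_{b^{-1}}$), and these intervals are pairwise at positive distance — the closures of their $\Phi$-preimages/images that define them are disjoint by the construction (items 2 and 3 of the construction, plus $\overline{B_{N_b} \cup B_{S_b}} \subset (\overline{B_{N_a} \cup B_{S_a}})^c$). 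So if $\delta_0$ is smaller than $\tfrac{1}{2}$ of the minimal gap between distinct $\overline{I_{s'}}$ and $\overline{I_{s''}}$, then a point within $\delta$ of $I_{s'}$ and simultaneously inside $A_0$ must be in $I_{s'}$, which is the conclusion.

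Concretely, I would define $\delta_0 = \min\{\eta, \theta/2\}$ where $\eta = \min_{s} \mathrm{dist}(\Phi_s(\overline{I_{s^{-1}}}), \overline{I_s})$ over the four generators and $\theta = \min \{ \mathrm{dist}(\overline{I_{s'}}, \overline{I_{s''}}) : s' \neq s'' \}$, both positive since they are minima of finitely many distances between disjoint compact subsets of $S^1$; one should also observe that these $I_{s'}$ are genuine intervals with disjoint closures, which follows from the construction and is noted implicitly in Remark \ref{rk1}. The main obstacle — really the only nontrivial point — is verifying cleanly that the defining sets of the four intervals $I_a, I_{a^{-1}}, I_b, I_{b^{-1}}$ have pairwise disjoint closures, so that $\theta > 0$; this is a finite check against the geometry set up in the construction section (the two balls for $\Phi_a$ are disjoint, the two balls for $\Phi_b$ are disjoint, and the $b$-balls avoid the $a$-balls), combined with Remark \ref{rk1}(2)--(3) to control where the relevant $\Phi$-images land. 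Everything else is a one-line triangle-inequality argument with $\delta_0$ chosen below these geometric thresholds.
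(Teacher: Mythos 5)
Your proof is correct and follows essentially the same route as the paper: both items reduce to the observation that the relevant disjoint compact sets ($\Phi_s(\overline{I_{s^{-1}}})$ versus $\overline{I_s}$ for item (1), and the four pairwise separated components of $A_0$ for item (2)) lie at positive distance, so any $\delta_0$ below these finitely many thresholds works together with the pseudotrajectory inequality. The only cosmetic differences are that the paper phrases item (1) through uniform continuity of $\Phi_{s^{-1}}$ rather than an explicit compact-set distance, and for item (2) it takes $\rho=\min_i|J_i|$ over the components $J_i$ of $S^1\setminus A_0$, which plays exactly the role of your $\theta$.
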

\begin{proof}
  1. By Remark \ref{rk1} item 3, we have that $\Phi_s(I_s)\cap I_{s^{-1}}=\emptyset$, for all $s\in \{a,b,a^{-1},b^{-1}\}$. From the uniform continuity of the maps $\Phi_s$ and
 $d(x_{s^{-1}g}, \Phi_{s^{-1}}(x_g))< \delta $, there exists $\delta_1 >0$ such that if $\delta <\delta_1$ and $x_g\in I_s$ then $x_{s^{-1}g}\notin I_{s^{-1}}$. \\

  2. Let $J_0,J_1,J_2$ and $J_3$ be the connected components of $S^{1}\setminus A_0$. Let $\rho=min \{ |J_0|,|J_1|,|J_2|, |J_3 |\}$ where $|J_i|$ is the lengths of the interval $J_i$.
 Since $\Phi_s(x_g), x_{sg} \in A_0$ and $d(x_{s^{}g}, \Phi_{s^{}}(x_g))< \delta $,  if $\delta <\rho$ then  $\Phi_s(x_g), x_{sg}\in I_{s^{'}} $ for $s,s^{'}\in \{a,b,a^{-1},b^{-1}\}$.

 Hence, taking $\delta_0 = min \{\delta_1, \rho \}$ the thesis is verified.

\end{proof}


\begin{lemma}\label{l2}
There exists  $\delta_0>0$ such that,  if $\{ x_g  \} $ is a $\delta$-pseudotrajectory with $0<\delta <\delta_0$ and $\{ x_g  \} \subset A_0$, then there exists $y\in K$ such that $ x_g\in I_s$ iff $\Phi_g (y)\in I_s$.
\end{lemma}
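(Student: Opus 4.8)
The plan is to reconstruct the shadowing point $y$ as the unique point of the Cantor set $K$ that lies in a nested intersection of sets indexed by longer and longer prefixes of the itinerary coded by $\{x_g\}$. Concretely, take $\delta_0$ as in Lemma \ref{l1}. Since $\{x_g\}\subset A_0$, for each $g\in G$ there is a (unique) $s(g)\in\{a,a^{-1},b,b^{-1}\}$ with $x_g\in I_{s(g)}$. I would first observe, using Lemma \ref{l1}(2) together with Remark \ref{rk1}(2), that the assignment $g\mapsto s(g)$ is ``admissible'' in the sense that reading it along the edges of the Cayley graph never forces an immediate backtracking: if $x_g\in I_s$ then $x_{s^{-1}g}\notin I_{s^{-1}}$ by Lemma \ref{l1}(1), so the symbol attached to the vertex $s^{-1}g$ is compatible with a reduced word beginning with $s^{-1}\cdot(\text{something}\neq s)$. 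This is exactly the combinatorial pattern that the construction of $A_{n+1}=\big[\bigcap_{s}\Phi_s(A_n)\big]\cap A_n$ encodes.

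Next I would build $y$. For each $n$, consider the word $g_n$ obtained by reading the symbols $s(e), s(s(e)^{-1}), \dots$ — more cleanly: define $y$ directly as follows. For $s\in S$ let $I_s^{(0)}=I_s$ and inductively $I_{s_1\cdots s_k}^{(k-1)}=I_{s_1}\cap \Phi_{s_1}\!\big(I_{s_2\cdots s_k}^{(k-2)}\big)$, which is a nonempty subinterval of $A_0$ whose length tends to $0$ as $k\to\infty$ (this is precisely the statement that components of $A_n$ shrink, since these nested intervals are exactly the components of the $A_n$'s). Using the itinerary $s(g)$, one selects at stage $n$ the component of $A_n$ corresponding to the length-$n$ reduced word whose letters are dictated by $\{x_g\}$ read along the geodesic from $e$; the compatibility established above guarantees these components are nested and nonempty, so $\bigcap_n$ of their closures is a single point $y\in K$. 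Then I would verify the conclusion $x_g\in I_s\iff \Phi_g(y)\in I_s$ by induction on $|g|$: for $|g|=1$ it is the defining choice; for the inductive step write $g=s g'$ in normal form, use Lemma \ref{l1}(2) to pass from $\Phi_s(x_{g'})$ to $x_{g}$, use Remark \ref{rk1}(2) to pass from $\Phi_g(y)\in I_{s(g')}$-type information through $\Phi_s$, and use Lemma \ref{lema_induccion} to control the first letter of $g$ and hence which $I_s$ contains $\Phi_g(y)$.

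The main obstacle I expect is the bookkeeping that matches ``itineraries of the pseudotrajectory along geodesics in $F_2$'' with ``nested components of $A_n$,'' i.e.\ making precise that the admissibility condition from Lemma \ref{l1}(1) is not merely necessary but sufficient to guarantee that the selected components at successive levels are genuinely nested and never empty. This requires checking that whenever the pseudotrajectory symbol changes as we move from $g'$ to $sg'$, the set $I_s\cap\Phi_s(I_{s(g')})$ is nonempty — which is where Remark \ref{rk1}(2) (the containment $\Phi_s(I_{s'})\subset I_s$ for $s\neq s'^{-1}$) does the work, combined with $K\subset A_n$ for all $n$ to ensure the Cantor set actually meets each selected component. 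Once that nesting is in place, uniqueness of $y$ follows from the vanishing of the lengths of the components of $A_n$, and the ``iff'' is a routine induction as sketched.
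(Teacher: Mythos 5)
Your proposal follows essentially the same route as the paper: you define $y$ as the intersection of nested intervals obtained by reading the symbols of the pseudotrajectory along the backward path $e,\ s_0^{-1},\ s_1^{-1}s_0^{-1},\dots$, using Lemma \ref{l1}(1) to rule out backtracking ($s_{n+1}\neq s_n^{-1}$), Remark \ref{rk1}(2) to get the nesting $\Phi_{s_n}(I_{s_{n+1}})\subset I_{s_n}$, the shrinking of components to pin down a single point, and then an induction on $|g|$ with Lemma \ref{l1}(2) for the ``iff''. This is exactly the paper's argument; the only cosmetic blemish is the invocation of Lemma \ref{lema_induccion} (which concerns orbits of points outside $A_0$ and is not needed here), since the first-letter control for $\Phi_g(y)$ already follows from Remark \ref{rk1}(2) and the normal form.
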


\begin{proof}

Let $\delta_0$ be given by Lemma \ref{l1} and $\delta >0$ with $0<\delta <\delta_0$.

Let $s_0\in \{a,b,a^{-1},b^{-1}\}$ be such that $x_e\in I_{s_{0}}$. Thus, we define $T_0=I_{s_{0}}$.

Now we consider the point $x_{s^{-1}_{0}}$.\\

Let $s_1\in \{a,b,a^{-1},b^{-1}\}$ be such that $x_{s^{-1}_{0}}\in I_{s_{1}}$.

  Since $\delta <\delta_0$, by Lemma \ref{l1} item 1), $x_{s^{-1}_{0}}\notin I_{s^{-1}_{0}}.$ Hence $s_{1} \neq s^{-1}_{0}$.

  Since $s_{1} \neq s^{-1}_{0}$, by Remark \ref{rk1}, item 2,)  $\Phi^{-1}_{s^{-1}_{0}} ( I_{s_{1}}) =\Phi_{s_{0}} ( I_{s_{1}})  \subset I_{s_{0}} $.

 Thus, we define $T_1= \Phi^{-1}_{s^{-1}_{0}} ( I_{s_{1}})\subset T_0$.

  Consider the point $x_{    s^{-1}_{1}    s^{-1}_{0}}$ .
Let $s_2\in \{a,b,a^{-1},b^{-1}\}$ be such that $x_{    s^{-1}_{1}    s^{-1}_{0}}\in I_{s_{2}}$.

as above we have $s_{2} \neq s^{-1}_{1}$ and $\Phi^{-1}_{s^{-1}_{1}}   (I_{s_{2}})\subset I_{s_{1}} $. Thus, define $T_2= \Phi^{-1}_{s^{-1}_{0}}  \Phi^{-1}_{s^{-1}_{1}}  (I_{s_{2}})\subset T_1 \subset T_0.$

Now we construct inductively a sequence of nested intervals $\{T_n\}$. Since $|T_n|\to 0$, let $$\{y\}=\bigcap_{n\geq 0}T_n.$$

We will prove that $ x_g\in I_s$ iff $\Phi_g (y)\in I_s$.

Clearly $x_e$ and $\Phi_e (y)=y$ belong to $I_{s_{0}}$.

Let $s\in  \{a,b,a^{-1},b^{-1}\}$.

If $s\neq s^{-1}_0$, then by Remark \ref{rk1} item 2), $\Phi_s (I_{s_{0}})\subset I_s$. By Lemma \ref{l1} item 2, $x_s$ and $\Phi_s(y)$ belong to $I_s$.

If $s= s^{-1}_0$, by definition of $y$,  $x_{s^{-1}_0}$ and $\Phi_{s^{-1}_0} (y)$ belong to $I_{s_{1}}$.

By reasoning inductively the thesis is verified.

\end{proof}

\begin{lemma}\label{l3}
Let $\delta_0$ be given by Lemma \ref{l1}. If $\{ x_g  \}$ is a $\delta$-pseudotrajectory with $0<\delta <\delta_0$ and $\{ x_g  \} \subset A_0$, then there exists $y\in K$ such that
 $$d(x_g, \Phi_g (y))<3\delta \mbox{ for all  } g\in G .$$

\end{lemma}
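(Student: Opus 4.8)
The plan is to use for $y$ the point constructed in the proof of Lemma~\ref{l2}: $y=\bigcap_{n\ge0}T_n$ with $T_n=\Phi_{\sigma_0}\Phi_{\sigma_1}\cdots\Phi_{\sigma_{n-1}}(I_{\sigma_n})$, where $\sigma_0$ is the index with $x_e\in I_{\sigma_0}$ and, inductively, $\sigma_k$ is the index with $x_{\sigma_{k-1}^{-1}\cdots\sigma_0^{-1}}\in I_{\sigma_k}$, so that $\sigma_{k+1}\ne\sigma_k^{-1}$ (Lemma~\ref{l1}). By Lemma~\ref{l2} we already have $x_g\in I_s\iff\Phi_g(y)\in I_s$; for a pseudotrajectory inside $A_0$ write $s(g)$ for the index with $x_g\in I_{s(g)}$, and call a reduced word $h=t_1\cdots t_m$ ($m\ge1$) \emph{special} when $h=\sigma_{m-1}^{-1}\sigma_{m-2}^{-1}\cdots\sigma_0^{-1}$. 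I will in fact prove the bound $d(x_g,\Phi_g(y))\le2\delta$, which gives the statement. The proof rests on two observations.

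\emph{(i) Type lemma: if $h=t_1\cdots t_m$ is reduced with $m\ge1$ then $s(h)=t_1$, unless $h$ is special, in which case $s(h)=\sigma_m$.} This is proved by induction on $m$ (the case $m=1$ by inspection, using the definition of $\sigma_0,\sigma_1$), using Lemma~\ref{l1}(2) and Remark~\ref{rk1}(2): writing $h=t_1h''$ with $h''=t_2\cdots t_m$, if $h''$ is not special then $x_{h''}\in I_{t_2}$ and, since $t_1\ne t_2^{-1}$, one gets $\Phi_{t_1}(x_{h''})\in\Phi_{t_1}(I_{t_2})\subset I_{t_1}$, hence $x_h\in I_{t_1}$ ($h$ not special); if $h''=\sigma_{m-2}^{-1}\cdots\sigma_0^{-1}$ is special, then either $t_1\ne\sigma_{m-1}^{-1}$, so again $x_h\in I_{t_1}$ and $h$ is not special, or $t_1=\sigma_{m-1}^{-1}$, in which case $h=\sigma_{m-1}^{-1}\cdots\sigma_0^{-1}$ is special and $x_h\in I_{\sigma_m}$ by the very definition of $\sigma_m$. \emph{(ii) Estimate for special words (and for $e$): if $h=\sigma_{\ell-1}^{-1}\cdots\sigma_0^{-1}$ with $\ell\ge0$ (so $h=e$ for $\ell=0$) then $d(x_h,\Phi_h(y))\le2\delta$.} Here, for $m\ge\ell$, the cancellation gives $\Phi_h(T_m)=\Phi_{\sigma_\ell}\Phi_{\sigma_{\ell+1}}\cdots\Phi_{\sigma_{m-1}}(I_{\sigma_m})$, a nested family of intervals shrinking to $\Phi_h(y)$, and the points $q_m:=\Phi_{\sigma_\ell}\cdots\Phi_{\sigma_{m-1}}\bigl(x_{\sigma_{m-1}^{-1}\cdots\sigma_0^{-1}}\bigr)$ lie in $\Phi_h(T_m)$ with $q_\ell=x_h$; one checks $d(q_m,q_{m-1})<\delta/2^{\,m-1-\ell}$ because $\Phi_{\sigma_{m-1}}(x_{\sigma_{m-1}^{-1}\cdots\sigma_0^{-1}})$ and $x_{\sigma_{m-2}^{-1}\cdots\sigma_0^{-1}}$ are $\delta$-close (pseudotrajectory) and both lie in $I_{\sigma_{m-1}}$, while $\Phi_{\sigma_{m-2}},\dots,\Phi_{\sigma_\ell}$ successively halve distances on $I_{\sigma_{m-1}},\dots,I_{\sigma_{\ell+1}}$ by Remark~\ref{rk1}(1). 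Summing the geometric series and letting $m\to\infty$ gives $d(x_h,\Phi_h(y))\le2\delta$.

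Granted (i) and (ii), I prove $d(x_g,\Phi_g(y))\le2\delta$ for all $g$ by induction on $|g|$, the case $g=e$ being (ii). For $g=s_1g'$ reduced: if $s(g')\ne s_1^{-1}$, then $x_{g'}$ and $\Phi_{g'}(y)$ both lie in $I_{s(g')}$ with $s(g')\ne s_1^{-1}$, so $\Phi_{s_1}$ halves distances there (Remark~\ref{rk1}(1)); combining $d(x_g,\Phi_{s_1}(x_{g'}))<\delta$ (pseudotrajectory) with the inductive bound for $g'$ yields $d(x_g,\Phi_g(y))<\delta+\tfrac12\cdot2\delta=2\delta$. If instead $s(g')=s_1^{-1}$, then $s(g')$ is not the first letter of $g'$ (by normality of $g$ that letter differs from $s_1^{-1}$), so by (i) $g'$ is special, $s(g')=\sigma_{|g'|}$, $s_1=\sigma_{|g'|}^{-1}$, and $g=\sigma_{|g'|}^{-1}\sigma_{|g'|-1}^{-1}\cdots\sigma_0^{-1}$ is itself special; thus (ii) gives $d(x_g,\Phi_g(y))\le2\delta$. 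In all cases $d(x_g,\Phi_g(y))\le2\delta<3\delta$. The one delicate point — the place where Lemmas~\ref{l1} and~\ref{l2} alone do not suffice — is exactly this second case: along a special word the naive letter-by-letter telescoping breaks down, since it would require applying some $\Phi_\sigma$ to points sitting in the expanding set $I_{\sigma^{-1}}$; the remedy is to detect these words through the type lemma and to estimate $x_g$ not letter by letter but against the shrinking images $\Phi_g(T_m)$, which after the cancellation of $\sigma_{\ell-1}^{-1}\cdots\sigma_0^{-1}$ against $\sigma_0\sigma_1\cdots$ reorganize into a non-cancelling nested family on which Remark~\ref{rk1}(1) does apply.
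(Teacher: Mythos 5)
Your proof is correct, but it takes a genuinely different route from the paper's. The paper also takes the point $y$ produced by Lemma \ref{l2} (so that $x_g$ and $\Phi_g(y)$ always lie in the same component $I_s$ of $A_0$), but then argues by contradiction using \emph{expansion}: if $d(x_{g_0},\Phi_{g_0}(y))\ge 3\delta$ with both points in $I_s$, it applies $\Phi_{s^{-1}}$, which expands distances on $I_s$ by a factor $>2$; the pseudotrajectory error costs at most $\delta$ per step, so the separation at least doubles minus $\delta$ at each stage (from $3\delta$ to $5\delta$ to $9\delta$, \dots) and grows without bound, contradicting the fact that the two points always remain in a common interval $I_s$. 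That argument is three lines long and never has to worry about which letter is being applied, since it moves ``backwards'' by $s^{-1}$ where $s$ is the current type. Your argument instead runs the induction ``forwards'' using the contraction of $\Phi_{s_1}$ on $I_{s(g')}$, which forces you to isolate and handle separately the words where $s(g')=s_1^{-1}$ and the one-letter-at-a-time telescoping breaks down; your type lemma correctly identifies these as exactly the ``special'' prefixes $\sigma_{m-1}^{-1}\cdots\sigma_0^{-1}$ used to build $y$, and your geometric-series estimate along the cancelled compositions $\Phi_{\sigma_\ell}\cdots\Phi_{\sigma_{m-1}}(I_{\sigma_m})$ is sound. What your approach buys is a direct (non-contradiction) proof, the sharper constant $2\delta$ in place of $3\delta$, and an explicit picture of where the obstruction sits; what it costs is length and a case analysis that the paper's expansion trick avoids entirely.
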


\begin{proof}
Let $\{ x_g  \}$ be a $\delta$-pseudotrajectory . By Lemma \ref{l2} there exists $y\in K$  such that $ x_g\in I_s$ iff $\Phi_g (y)\in I_s$.\\
 let's prove that $d(x_g, \Phi_g (y))<3\delta $.
Suppose that there exists $g_0\in G$ such that $d(x_{g_{0}}, \Phi_{g_{0}} (y))\geq 3\delta $.  Let $I_s$ be such that $x_{g_{0}}\in I_s$ with $s\in \{ a,a^{-1},b,b^{-1}\}$. Since $ x_g\in I_s$ iff $\Phi_g (y)\in I_s$, then $\Phi_{g_{0}} (y)\in I_s$. Recall that  $||\Phi_{s^{-1}}(x)-\Phi_{s^{-1}}(y)||> 2|| x-y   ||$  for all $x,y\in I_{s}$.

Since $$d(x_{g_{0}}, \Phi_{g_{0}} (y))\geq 3\delta \mbox{ then }  d(\Phi_{s^{-1}}(x_{g_{0}}), \Phi_{s^{-1}} (\Phi_{g_{0}} (y)))= d(\Phi_{s^{-1}}(x_{g_{0}}), \Phi_{s^{-1}g_{0}} (y)) \geq 6\delta    $$

Also

$$d(\Phi_{s^{-1}}(x_{g_{0}}), \Phi_{s^{-1}g_{0}} (y))\leq d(\Phi_{s^{-1}}(x_{g_{0}}), x_{s^{-1}g_{0}} )+ d( x_{s^{-1}g_{0}}, \Phi_{s^{-1}g_{0}} (y)) $$
Since $d(\Phi_{s^{-1}}(x_{g_{0}}), x_{s^{-1}g_{0}} )<\delta $, we obtain

 $$ d(x_{s^{-1}g_{0}}, \Phi_{s^{-1}g_{0}} (y))   \geq 6\delta-\delta =5\delta .  $$

Thus, by reasoning inductively, there exists $g_n\in G$ such that  $d(x_{g_{n}}, \Phi_{g_{n}} (y))\geq n\delta $. This is a contradiction because $ x_g$ and $\Phi_g (y)$ belong to the same $I_s$.\\

\end{proof}

Given $\varepsilon >0$, taking  $\delta =\varepsilon /3$, we have the shadowing property for any $\delta$-pseudotrajectory contained in $A_0$.\\

{\bf{ Pseudotrajectory not contained in $A_0$.}}
\begin{lemma}
There exists $\delta_1>0$ such that if $\{ x_g  \} $ is a $\delta$-pseudotrajectory with $\delta <\delta_1$, then there exist at most two elements $g_{0},g_{1} \in G$ such that  $x_{g_{0}},x_{g_{1}} \notin A_0$.
  \end{lemma}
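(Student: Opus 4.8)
The plan is to show that a $\delta$-pseudotrajectory cannot leave $A_0$ more than twice, mirroring the structure of Corollary \ref{lema_adentro} (which is the exact $\delta=0$ statement) but with quantitative slack. First I would fix $\rho = \min\{|J_0|,|J_1|,|J_2|,|J_3|\}$, the minimum length of a gap component of $S^1\setminus A_0$, and recall that by uniform continuity of the generators there is $\delta_1\in(0,\rho)$ such that $d(x,y)<\delta_1$ forces $d(\Phi_s(x),\Phi_s(y))$ to be small enough for the Remark \ref{rk1} inclusions to be respected ``up to $\delta$'' — concretely, so that the conclusion of Lemma \ref{lema_induccion} survives: if $x\notin A_0$ then $\Phi_g(x)$ stays in $A_0$ for $2\le|g|\le n$, and lands in $I_s$ with $s$ the first letter of the normal form of $g$. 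The point is that along a genuine orbit of a point $z\notin A_0$, the images $\Phi_g(z)$ for $|g|\ge 2$ are trapped in a nested family of $I_s$'s that shrink (by the contraction estimate $\|\Phi_s(x)-\Phi_s(y)\|<\frac12\|x-y\|$ on $I_{s'}$, $s'\ne s^{-1}$, of Remark \ref{rk1} item 1), so they stay deep inside $A_0$, well away from the gaps.

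Next I would argue by contradiction: suppose three distinct elements $g_0,g_1,g_2\in G$ have $x_{g_i}\notin A_0$. The key reduction is that ``being a $\delta$-pseudotrajectory'' is essentially a local, finite-range condition, so I can translate the whole $G$-sequence by $g_i^{-1}$ and study, for each $i$, the finite family $\{x_{hg_i}\}_{|h|\le L}$ for a suitable fixed word length $L$ depending only on the constants $r_j$ (the radii) and the contraction rates — large enough that along a true orbit from $A_0^c$ the images $\Phi_h(z)$, $2\le|h|\le L$, are forced to sit at distance $>\,2\delta_1$, say, from $\partial A_0$. Comparing the pseudotrajectory values $x_{hg_i}$ with the true images $\Phi_h(x_{g_i})$ (which differ by at most $C\delta$ after $|h|$ steps, $C$ bounded because the relevant maps are contracting on the region the orbit lives in — this is exactly the mechanism of Lemma \ref{l3}), I get that $x_{hg_i}$ lies well inside $A_0$ for $2\le|h|\le L$, and moreover in the component $I_s$ prescribed by the first letter of $h$. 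Now the combinatorics of the free group force two of the three excursions to interact: among the reduced words $g_i g_j^{-1}$ there is one that, written in normal form, has length $\ge 2$ (in fact at most two of the $g_i$ can have pairwise cancellation collapsing them to length $\le 1$ of each other — this is where I reuse the ``at most two points of $O(z)$ in $A_0^c$ when $|g|\le 1$'' bookkeeping from the proof of Corollary \ref{lema_adentro}). Writing $g_i = h g_j$ with $|h|\ge 2$, Lemma \ref{lema_induccion} applied along the (nearly-true) orbit emanating from $x_{g_j}\notin A_0$ gives $x_{g_i}=x_{hg_j}\in A_0$, contradicting $x_{g_i}\notin A_0$.

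The main obstacle, and the step I would spend the most care on, is the ``at most two'' bound at short range — i.e., handling the words $h$ with $|h|\le 1$ cleanly. For a genuine point this is the elementary case-check hidden in Corollary \ref{lema_adentro} (a north–south homeomorphism composed once can push at most one new point out of $A_0$ from each side of its repelling ball), but for a $\delta$-pseudotrajectory one must verify these finitely many cases with the $\delta$-error present, which is where the requirement $\delta_1<\rho$ and the uniform-continuity modulus get used to guarantee that no spurious third excursion is created. Concretely I would enumerate: if $x_{g_0},x_{g_1},x_{g_2}\notin A_0$ and no two of the $g_i$ differ by a word of length $\ge 2$, then all three lie in a ball of word-radius $1$ around a common element, and a direct inspection of the images under $\Phi_a^{\pm1},\Phi_b^{\pm1}$ of the (at most) regions of $A_0^c$ — using $\Phi_a(\partial B_{N_a})\subset B_{S_a}$ and the disjointness $\overline{B_{N_b}\cup B_{S_b}}\subset(\overline{B_{N_a}\cup B_{S_a}})^c$ — shows only two such excursions can coexist, even after a $\delta<\delta_1$ perturbation. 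Once that short-range bound is in hand, the long-range part is just the induction of Lemma \ref{lema_induccion} carried along a contracting (hence error-damping) branch, so no further difficulty arises.
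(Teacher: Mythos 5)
Your proposal is correct and follows the same route as the paper, whose own proof merely invokes Corollary \ref{lema_adentro} (the exact-orbit statement) together with ``uniform continuity''; your version supplies the missing substance, namely the error-damping contraction along reduced words that makes the transfer from orbits to pseudotrajectories work for words of arbitrary length (this is exactly what Lemma \ref{lema_induccion2} encapsulates). One simplification: the short-range case you single out as the main obstacle is vacuous, because three distinct elements of $F_2$ cannot be pairwise at word-distance $\le 1$ (if $g_1=sg_0$ and $g_2=s'g_0$ with $s\neq s'$ generators, then $g_1g_2^{-1}=s(s')^{-1}$ is already reduced of length $2$), so the long-range argument alone forces two of any three excursions to contradict Lemma \ref{lema_induccion2}.
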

\begin{proof}
 By Lemma \ref{lema_adentro} for all  $x\in S^{1}$, $\sharp O(x)\cap A_0^{c}\leq 2$.
 Hence by uniform continuity of the maps $\Phi_s$ there exists $\delta_1>0$ that verifies the thesis.
\end{proof}

\begin{lemma}\label{lema_induccion2}

There exists $\delta_2>0$ such that  if $\{ x_g  \}$   is a $\delta$-pseudotrajectory with $\delta <\delta_2$ and $x_e\notin A_0$, then for any $g\in G$ with $2\leq |g|\leq n$ hold:
\begin{itemize}
\item $x_g \in A_0$,
\item If $x_g  \in I_{s}$, $s\in \{ a,a^{-1},b,b^{-1}\}$ and $g=s_{1}\cdots s_{n}$ then $s_{1}=s$.

\end{itemize}
  \end{lemma}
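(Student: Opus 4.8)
The plan is to mimic the proof of Lemma \ref{lema_induccion}, but now working with the pseudotrajectory $\{x_g\}$ instead of a genuine orbit, absorbing the $\delta$-errors exactly as was done in Lemma \ref{l1}. First I would fix $\delta_2>0$ small enough that two things hold simultaneously: $\delta_2<\delta_1$ (so that at most two entries of $\{x_g\}$ lie outside $A_0$, by the previous lemma) and $\delta_2$ is smaller than the uniform-continuity modulus associated, via the maps $\Phi_s$, to the positive distances $d(\Phi_s(\overline{I_{s'}}),\,\partial I_s)$ for the finitely many admissible pairs $s\neq s'^{-1}$, as well as smaller than $\rho/2$ where $\rho$ is the minimal length of a component of $S^1\setminus A_0$ from Lemma \ref{l1}. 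These are exactly the quantities that made Remark \ref{rk1} items 2) and 3) survive the passage from $\Phi_s(x_g)$ to $x_{sg}$.

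The argument itself is induction on $|g|$, with base cases $|g|=2,3$ handled by direct inspection, precisely as in Lemma \ref{lema_induccion}. For the inductive step, write $g'=sg$ in normal form with $|g|=n\geq 2$ and $s\neq s_1^{-1}$ where $g=s_1\cdots s_n$. By the inductive hypothesis $x_g\in A_0$, say $x_g\in I_{s_1}$ with first letter $s_1$. Since $s\neq s_1^{-1}$, Remark \ref{rk1} item 2) gives $\Phi_s(I_{s_1})\subset I_s$, and because $d(\Phi_s(x_g),x_{g'})<\delta<\delta_2$ while both points lie in $A_0$, the choice of $\delta_2$ forces $x_{g'}\in I_s$; this proves both bullets at length $n+1$. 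The only genuinely new point compared to Lemma \ref{lema_induccion} is the base of the induction: one must check that an entry $x_g$ with $|g|=2$ or $3$ already lies in $A_0$ even though $x_e\notin A_0$. Here one uses that at most two entries are outside $A_0$ (Lemma before), that $x_e$ is one of them, and an $s$-by-$s$ case analysis of where $\Phi_s(x_e)$ can land for $x_e$ in each component $J_i$ of $S^1\setminus A_0$, together with the $\delta$-estimate; one also records in these small cases which $I_s$ contains $x_g$ so that the "first-letter" bookkeeping is correctly initialized for the induction.

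The main obstacle I anticipate is the base-case case analysis: unlike in Lemma \ref{lema_induccion}, where $x$ lies outside $A_0$ but its image under any single generator is already well understood geometrically, here $x_e$ lies in one of the gaps $J_i$ and $\Phi_s(x_e)$ need not obviously fall into the right $I_s$. One must use the structure of the maps $\Phi_a,\Phi_b$ (north–south dynamics, $\Phi_a(\partial B_{N_a})\subset B_{S_a}$, and the contraction/expansion estimates) to pin down $\Phi_s(x_e)\in A_0$ for $|g|\geq 2$, possibly handling separately whether the second generator applied is $s_1$ itself or another letter. Once $x_g\in A_0$ for $|g|=2,3$ is established, the inductive step is essentially a transcription of the proof of Lemma \ref{lema_induccion} with Lemma \ref{l1}'s $\delta$-absorption inserted, so no further difficulty is expected. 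I would end the proof by noting that the same reasoning applies verbatim with $a$ replaced by any generator in the role played above, so that all four symmetric generators are covered.

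\begin{proof}
Choose $\delta_2>0$ with $\delta_2<\delta_1$ (so that, by the previous lemma, at most two entries of the pseudotrajectory lie in $A_0^c$), with $\delta_2<\rho/2$ where $\rho$ is as in Lemma \ref{l1}, and small enough that for every admissible pair $s\neq s'^{-1}$ one has $d(\Phi_s(x),y)<\delta_2$ with $x\in\overline{I_{s'}}$ implying $y\in I_s$ (possible by uniform continuity of the $\Phi_s$ and Remark \ref{rk1} item 2), since $\Phi_s(\overline{I_{s'}})$ is a compact subset of the open set $I_s$).

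We argue by induction on $|g|$. For $|g|=2$ or $3$, since $x_e\notin A_0$ and at most two entries lie outside $A_0$, a direct inspection of the finitely many possibilities for $g$ (using the north--south dynamics of $\Phi_a,\Phi_b$, the inclusion $\Phi_s(\partial B_{N_s})\subset B_{S_s}$, and $d(\Phi_s(x_{g_0}),x_{sg_0})<\delta_2<\rho/2$) shows that $x_g\in A_0$ and that, writing $g=s_1\cdots s_{|g|}$, the first letter $s_1$ is the index of the component $I_{s_1}$ containing $x_g$.

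Assume the statement holds for all $g\in G$ with $2\leq|g|\leq n$, and let $g'\in G$ with $|g'|=n+1$. Write $g'=sg$ in normal form with $s\in\{a,a^{-1},b,b^{-1}\}$ and $|g|=n$; thus $s\neq s_1^{-1}$, where $g=s_1\cdots s_n$. By the inductive hypothesis $x_g\in A_0$, and there is $s'\in\{a,a^{-1},b,b^{-1}\}$ with $x_g\in I_{s'}$; again by the inductive hypothesis $s'=s_1$. Since $s\neq s_1^{-1}=s'^{-1}$, Remark \ref{rk1} item 2) gives $\Phi_s(I_{s'})\subset I_s$, hence $\Phi_s(x_g)\in I_s$. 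As $d(\Phi_s(x_g),x_{g'})<\delta<\delta_2$ and $x_g\in\overline{I_{s'}}$, the choice of $\delta_2$ forces $x_{g'}\in I_s$. In particular $x_{g'}\in A_0$, and the index of the component of $A_0$ containing $x_{g'}$ is $s$, which is the first letter of the normal form $g'=s\,s_1\cdots s_n$. This completes the induction.
\end{proof}
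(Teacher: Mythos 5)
Your proof is correct and follows essentially the same route as the paper, whose entire argument is the two-line remark that the result holds for genuine orbits (Lemma \ref{lema_induccion}) and persists by uniform continuity; your explicit induction, in which the error is reset at each step because only the combinatorial datum of which $I_s$ contains $x_g$ is propagated, is the honest version of that remark and is in fact what makes a single $\delta_2$ work for words of arbitrary length. The only imprecision is the phrase ``compact subset of the open set $I_s$'' (the sets $I_s$ are closed); what you actually need, and what does hold since $\overline{I_{s'}}$ is disjoint from $\overline{B_{N_s}}$ when $s'\neq s^{-1}$, is that the compact set $\Phi_s(\overline{I_{s'}})$ is disjoint from the closure of $I_s^{c}$ and hence at positive distance from $I_s^{c}$.
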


\begin{proof}
 By Lemma \ref{lema_induccion} the result is true for an orbit. Thus, by uniform continuity of maps $\Phi_s$ there exists $\delta_2>0$ such that the thesis is verified.\\
\end{proof}

Let $\{ x_g  \}$ be a $\delta$-pseudotrajectory.  Without loss of generality (renaming the pseudotrajectory if necessary) we can assume that $x_e\notin A_0$.\\

We will prove that for any $\varepsilon >0$ there exists $\delta >0$ such that, if $\{ x_g  \}$ is a $\delta$-pseudotrajectory, then $d(x_g,\Phi_g (x_e))<\varepsilon $ for all $g\in G$.

The proof is by induction in the length of $g$.\\

\begin{proof}

Let $\delta_0$ be given by Lemma \ref{lema_induccion2}. Given $\varepsilon >0$, let $\delta >0$ be such that:
\begin{enumerate}
  \item $\delta < min\{\delta_0, \varepsilon /2 \}$,
\item If $|g|\leq 2$, then
\begin{enumerate}
\item $d(x_g,\Phi_g (x_e))<\varepsilon  $ and
\item If $\Phi_g (x_e)\in I_s$, then $x_g\in I_s$, $s\in \{ a,a^{-1},b,b^{-1}\}$.
\end{enumerate}
\end{enumerate}

We will prove, by induction, that  $g\in G$, with $ |g|= n$, and $\Phi_g (x_e)\in I_s$ then $x_g\in I_s$  and $d(x_g,\Phi_g (x_e))<\varepsilon $.

 By item 2) above, we have the base case.

Let $g\in G$ be, $|g|=n+1> 2$. Thus  $g=sg^{'}$, with $|g^{'}|=n$. Since $x_e\notin A_0$ then, by Lemma \ref{lema_induccion2} item 2), $x_{g^{'}}\in I_{s^{'}}$ for some $s^{'}$ and $g^{'}=s^{'}g^{''}$. Since $|g|=n$ and $g=ss^{'}g^{''}$, then $s^{'}\neq s^{-1}$. Hence $||\Phi_s(x)-\Phi_s(y)||< 1/2|| x-y   ||$  for all $x,y\in I_{s^{'}}$.

 By assumption,  $\Phi_{g^{'}} (x_e)\in I_{s^{'}}$.  Hence

$$  d(x_g,\Phi_g (x_e))=d(x_{sg^{'}},\Phi_s\Phi_{g^{'}} (x_e))\leq  d(x_{sg^{'}},\Phi_s (x_{g^{'}}) )  +  d(  \Phi_s (x_{g^{'}} ), \Phi_s\Phi_{g^{'}} (x_e))       \leq          $$

$$   \delta + \frac{1}{2}     d(   (x_{g^{'}} ), \Phi_{g^{'}} (x_e))   .       $$

Since   $d(   (x_{g^{'}} ), \Phi_{g^{'}} (x_e)) <\varepsilon  $ and $\delta <\varepsilon /2$, follows $  d(x_g,\Phi_g (x_e))<\varepsilon$.\\

Since $\Phi_s|_{I_{s^{'}}}$ is a contraction,  $x_g$ and $\Phi_g (x_e)$ belong to the same interval $I_s$, $s\in \{ a,a^{-1},b,b^{-1}\}$.

\end{proof}

Final comments:\\
This example is easily generalizable to $S^{n}$. In the proofs made in this section, it was not used at all that $X=S^{1}$.\\
A dynamic system can have a Cantor set as a minimal set and not have the shadowing property. Just take a Denjoy homeomorphism $ \Phi_a $ and $ \Phi_b = Id $, and the action generated by $ \Phi_a $ and $ \Phi_b $. \\

\section{Proof of Theorem A}

We will start by getting some helpful results for our proof.

Let $(G,X,\Phi )$ be a dynamical system. We say that a point $N\in X$ is expansive point for $g\in G$ if there exists $\alpha >0$ such that for any $\delta >0$, if $0<d(y,N)<\delta$, then there exists $n\in \Z$ such that $d(\Phi_{g^{n}}(y), \Phi_{g^{n}}(N))>\alpha$. The number $\alpha$ is called a expansive constant of $N$.

\begin{rk}\label{rk3}
Let $N$ be an expansive point for $\Phi_a$ with expansive constat $\alpha$. Let $\{x_g\}$ be a $\delta$-pseudotrajectory with $x_{a^{m}}=\Phi_{a^{m}}(N)$ for all $m\in\Z$. If there exists $y\in X$ such that $d( x_g, \Phi_g(y))<\alpha$ for all $g\in G$, then $y=N$.
\end{rk}

\begin{lemma}\label{lema_general}

Let $(G,X,\Phi )$ be a dynamical system, $X$ a metric space, $N$ is a not isolated point and it is an expansive point for $\Phi_a$. If there exists an invariant connected set $M$  with $M\subset \overline{O(N)}$, then $\Phi$ does not have the shadowing property.
\end{lemma}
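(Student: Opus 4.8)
The plan is to build, from the expansivity at $N$ and the hypothesis $M \subset \overline{O(N)}$, a pseudotrajectory that cannot be $\varepsilon$-shadowed for $\varepsilon$ comparable to the expansivity constant $\alpha$. Fix the expansive constant $\alpha$ of $N$ for $\Phi_a$, and let $\varepsilon < \alpha/2$ (so that a shadowing point would have to be essentially unique near $N$, as recorded in Remark~\ref{rk3}). Suppose, for contradiction, that $\Phi$ has the shadowing property, and let $\delta$ be the constant associated to this $\varepsilon$. The first step is to define a $\delta$-pseudotrajectory $\{x_g\}$ that agrees with the genuine orbit of $N$ along the cyclic subgroup $\langle a\rangle$, i.e. $x_{a^m} = \Phi_{a^m}(N)$ for all $m \in \Z$, but which is ``pushed'' along the other generator direction so as to force the shadowing point $y$ (which by Remark~\ref{rk3} must equal $N$) into a contradiction.

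The key mechanism is the following. Because $M$ is connected and invariant with $M \subset \overline{O(N)}$, the point $N$ lies in the closure of $O(N)$, so points of $O(N)$ (equivalently, points of the form $\Phi_g(N)$) accumulate on $N$; moreover, since $M$ is connected and not a single point (as $N$ is non-isolated and $M\subset\overline{O(N)}$ forces $M$ to contain an accumulation structure), one can select a group element $h$ with $\Phi_h(N)$ arbitrarily close to $N$ but $\Phi_h(N) \neq N$. Concretely, I would pick $h \in F_2$ (in normal form, not a power of $a$) such that $0 < d(\Phi_h(N), N) < \delta$. Now define the pseudotrajectory by prescribing $x_g = \Phi_{g h^{-1}}\big(\Phi_h(N)\big) = \Phi_g(N)$ on most of the group but ``splicing in'' a jump of size less than $\delta$ at the junction where the $h$-block meets the $a$-axis; more carefully, one sets $x_{a^m} = \Phi_{a^m}(N)$ for all $m$, and for the remaining $g$ one transports $N$ by $\Phi_g$ except along one edge of the Cayley graph adjacent to the $\langle a\rangle$-axis, where the discrepancy between $\Phi_s(x_g)$ and $x_{sg}$ is exactly $d(\Phi_h(N),N) < \delta$ by the choice of $h$. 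The uniform continuity of $\Phi$ guarantees that this single controlled jump of size $<\delta$ propagates as a legitimate $\delta'$-pseudotrajectory for a comparable $\delta'$ (shrinking $\delta$ at the outset if necessary so that $\delta'$ is still admissible).

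By the shadowing hypothesis there is $y$ with $d(x_g, \Phi_g(y)) < \varepsilon < \alpha/2$ for all $g$; applying this to $g = a^m$ and using expansivity of $N$ for $\Phi_a$ (Remark~\ref{rk3}) forces $y = N$. But then $d(x_g, \Phi_g(N)) < \varepsilon$ for all $g$, and in particular along the direction of the spliced block this says $d(\Phi_g(\Phi_h(N)), \Phi_g(N)) < \varepsilon$ for all $g$ in an infinite set; feeding this through the connectedness of $M$ and the invariance $M \subset \overline{O(N)}$ — which lets us iterate $\Phi_a$ inside $M$ and use expansivity again at a \emph{different} point of $M\cap\overline{O(N)}$ near $N$, or equivalently re-apply the expansivity of $N$ after conjugating — contradicts $\Phi_h(N)\neq N$. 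The main obstacle, and the step that needs the most care, is making the construction of the pseudotrajectory genuinely compatible with the group structure of $F_2$: one must check that prescribing $x_g$ on the Cayley graph with a single localized defect is consistent (no two generator relations force conflicting values, which is exactly where freeness of $F_2$ is used) and that the defect size stays below $\delta$ under \emph{all} generators simultaneously, which is where uniform continuity and the choice of $h$ close to $N$ must be balanced. This is precisely the kind of argument that item~1 of Theorem~\ref{teoot} handles in the globally expansive case; here the novelty is localizing it to the single expansive point $N$ using the connected invariant set $M$ to guarantee a supply of returns of $O(N)$ near $N$.
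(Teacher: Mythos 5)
Your overall skeleton is the right one --- copy the true orbit of $N$ along $\langle a\rangle$ so that Remark~\ref{rk3} with $\varepsilon<\alpha/2$ forces the shadowing point to be $N$, then contradict this by a second application of expansivity elsewhere --- but the construction of the defective pseudotrajectory rests on a claim the hypotheses do not give you. You assert that points of $O(N)$ accumulate on $N$, and hence that one can choose $h$ with $0<d(\Phi_h(N),N)<\delta$. Nothing in the statement says $N\in M$, nor that $O(N)$ returns near $N$: the hypotheses only say that \emph{some} connected invariant set $M$ sits inside $\overline{O(N)}$, and $M$ could perfectly well be a single fixed point far from $N$ (your parenthetical claim that $M$ cannot be a point is also unjustified). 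So the element $h$ on which your entire splicing construction depends need not exist; and your closing step, ``feeding this through the connectedness of $M$\dots,'' is not an argument --- connectedness never actually does any work in your proof, whereas it is the crux of the lemma.

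The paper uses connectedness in an essential and quite different way. Writing $F_2^{s}$ for the reduced words with leading letter $s$ and $A_s=\overline{F_2^{s}(N)}\cap M$, one checks that each $A_s$ is nonempty and closed in $M$; since the four sets $A_s$ cover the connected set $M$, two of them with \emph{distinct} leading letters must meet, producing $g_1,g_2$ with $d(\Phi_{bg_1}(N),\Phi_{sg_2}(N))<\delta$ and $s\neq b$. This pair is what makes the pseudotrajectory work: travel out along $bg_1$ following the true orbit of $N$, jump (error $<\delta$) to $\Phi_{sg_2}(N)$, and travel back along the inverse letters of $sg_2=s_1\cdots s_r a^{m}$ to arrive, at the group element $g_0=s_r^{-1}\cdots s_1^{-1}s^{-1}bg_1$, at the point $\Phi_{a^{m}}(N)$. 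Freeness and $s\neq b$ guarantee $g_0\notin\langle a\rangle$, so one may plant there a point $N'\neq\Phi_{a^{m}}(N)$ with $d(N',\Phi_{a^{m}}(N))<\delta$ (here the non-isolatedness of $N$, hence of $\Phi_{a^{m}}(N)$, is what is actually used) and continue dynamically along $\langle a\rangle g_0$; expansivity at $\Phi_{a^{m}}(N)$ then contradicts $y=N$. Your proposal is missing exactly this mechanism --- the connectedness-forced $\delta$-coincidence of two orbit branches with different leading letters, which is what creates a legitimate defect at a group element off the $\langle a\rangle$-axis --- and without it the defect you want to splice in cannot be produced from the stated hypotheses.
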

\begin{proof}

For each $s\in \{a,a^{-1},b,b^{-1} \}$ let $$F_2^{s}=\{ g\in F_2:\ g=sg_1 \}.$$ Note that $F_2= \bigcup _{s\in \{a,a^{-1},b,b^{-1} \} }F_2^{s}$.
Consider $F_2^{s}(N)=  \{\Phi_g(N), \ g\in F_2^{s} \} $. Since $M\subset \overline{O(N)}$, thus $M\subset \bigcup _{s\in \{a,a^{-1},b,b^{-1} \} }\overline{F_2^{s}(N)}$.\\

{\bf{Claim:}} $\overline{F_2^{s}(N)}\cap M\neq\emptyset$ for all $s\in \{a,a^{-1},b,b^{-1} \}$.\\
Since $M\subset \bigcup _{s\in \{a,a^{-1},b,b^{-1} \} }\overline{F_2^{s}(N)}$ then $M\cap \overline{F_2^{s_{0}}(N)}\neq\emptyset$ for some $s_0\in \{a,a^{-1},b,b^{-1} \}$. Then there exists $\{g_n \}\subset F_2^{s_{0}}$ and $z\in M$ such that $\Phi_{g_{n}}(N)\to z$. Hence, given $s\in \{a,a^{-1},b,b^{-1} \}$ with $s\neq s_0^{1}$ we have that $sg_n\in F_2^{s}$ and  $\Phi_s\Phi_{g_{n}}(N)\to \Phi_s(z)\in M$. Then $\overline{F_2^{s}(N)}\cap M\neq\emptyset$.\\
If $s=s_0^{-1}$, consider $s_1\neq s_0$ , $s_1\neq s^{-1}_0$. As proved above $\overline{F_2^{s_{1}}(N)}\cap M\neq\emptyset$. Reasoning in a similar way to the previous case, changing $s_0$ for $s_1$, we conclude $\overline{F_2^{s^{-1}_{0}}(N)}\cap M\neq\emptyset$.$\Box$\\

For $s \in \{a,a^{-1},b,b^{-1} \}$, let $A_s=\overline{F_2^{s}(N)}\cap M$. Note that $A_s$ is a non-empty closed set in $M$ for all $s$. Since $M$ is a connected set,

$$\left(\left(\overline{F_2^{b}(N)} \cup \overline{F_2^{b^{-1}}(N)} \right)     \cap M \right) \cap  \left(   \left(\overline{F_2^{a}(N)} \cup \overline{F_2^{a^{-1}}(N)} \right)     \cap M \right)  \neq\emptyset . $$

Without loss of generality we can suppose that  $\overline{F_2^{b}(N)}$ intersect to $\overline{F_2^{s}(N)}$ with $s\neq b$.\\

Suppose that $\Phi$ has the shadowing property for $\varepsilon , \delta$, with $\varepsilon <\alpha /2$ where $\alpha$ is  the expansive constant of $N$ for $\Phi_a$.

Since  $\overline{F_2^{b}(N)}\cap \overline{F_2^{s}(N)}\neq \emptyset$ then there exists $g_1, g_2\in G$  such that $$d (\Phi_{bg_{1}}(N), \Phi_{sg_{2}}(N))<\delta . $$

 Let $g_2=s_1...s_ra^{m} $ with $m\in\Z$ y $s_r\neq a$.

 Thus $$d (\Phi_{bg_{1}}(N), \Phi_{ss_1...s_r}(\Phi_{a^{m}}(N))<\delta . $$

%
%
%

let's  $g_1=s^{'}_1... s^{'}_t $ and $g_2=s_1...s_ra^{m} $ with $m\in\Z$ and $s_r\neq a$.
let's define the $\delta$-pseudotrajectory $\{x_g\}$ as follows(see figure \ref{figura7}):\\

\begin{itemize}

\item   $x_{a^{n}}=\Phi_{a^{n}}(N)$ for all $n\in\Z$,\\
\item $x_{ s^{'}_t }  =\Phi_{s^{'}_t }(N)$, $x_{s^{'}_{t-1} s^{'}_t }  =\Phi_{s^{'}_{t-1} s^{'}_t }(N),$..., $x_{ g_{1} }  =\Phi_{g_{1}}(N),$

\item  $x_{ bg_{1} }  =\Phi_{sg_{2}}(N)=\Phi_{ss_1...s_r}(\Phi_{a^{m}}(N)) ,$ ( here we use that $d (\Phi_{bg_{1}}(N), \Phi_{sg_{2}}(N))<\delta . $ )

\item $x_{ s^{-1}bg_{1} }  =\Phi_{g_{2}}(N)=\Phi_{s_1...s_r}(\Phi_{a^{m}}(N))$, ( here we use that $s\neq b$)

\item $x_{ s^{-1}_1s^{-1}bg_{1} }  =\Phi_{s_2...s_r}(  \Phi_{a^{m}}(N))$,..., $x_{ s^{-1}_{r-1}...s^{-1}_1s^{-1}bg_{1} }  = \Phi_{s_{r}a^{m}}(N)$ and

\item $x_{ s^{-1}_r...s^{-1}_1s^{-1}bg_{1} }  = N^{'}$ such that  $d(N^{'} , \Phi_{a^{m}}(N))<\delta$.

\end{itemize}

From here we define the pseudotrajectory dynamically.

Since $x_{a^{n}}=\Phi_{a^{n}}(N)$ for all $n\in\Z$, by Remark \ref{rk3}, if there exists $y\in X$ such that $d( x_g, \Phi_g(y))<\alpha$ for all $g\in G$ then $y=N$.

Since $\Phi_{s^{-1}_r...s^{-1}_1s^{-1} } (\Phi_{s{g_{2}}}(N)) =\Phi_{a^{m}}(N)$, then  $\Phi_{s^{-1}_r...s^{-1}_1s^{-1}bg_{1}}(N) \neq \Phi_{a^{m}}(N)$. Hence we take $N^{'} \neq \Phi_{a^{m}}(N)$


 Now ,applying by Remark \ref{rk3} to point $\Phi_{a^{m}}(N)$ we obtain a contradiction.

\end{proof}

Now we state a result about the classification of homeomorphisms with the shadowing property.

\begin{figure}[h]

\psfrag{sg2}{$\Phi_{sg_{2}}(N)$}

\psfrag{bg1}{$\Phi_{bg_{1}}(N)$}

\psfrag{an}{$\Phi_{a}(N)$}

\psfrag{amn}{$N^{'}$}
\psfrag{n}{$N$}

\psfrag{gi0}{$\Phi_b(I_{0})$}

\psfrag{gi1}{$\Phi_b(I_{1})$}

\psfrag{gj0}{$\Phi_b(I_{2})$}

\psfrag{gj1}{$\Phi_b(I_{3})$}

\psfrag{ggi0}{$\Phi_b^{-1}(I_{0})$}

\psfrag{ggi1}{$\Phi_b^{-1}(I_{1})$}

\psfrag{ggj0}{$\Phi_b^{-1}(I_{2})$}

\begin{center}
\caption{\label{figura7}}
\subfigure[]{\includegraphics[scale=0.3]{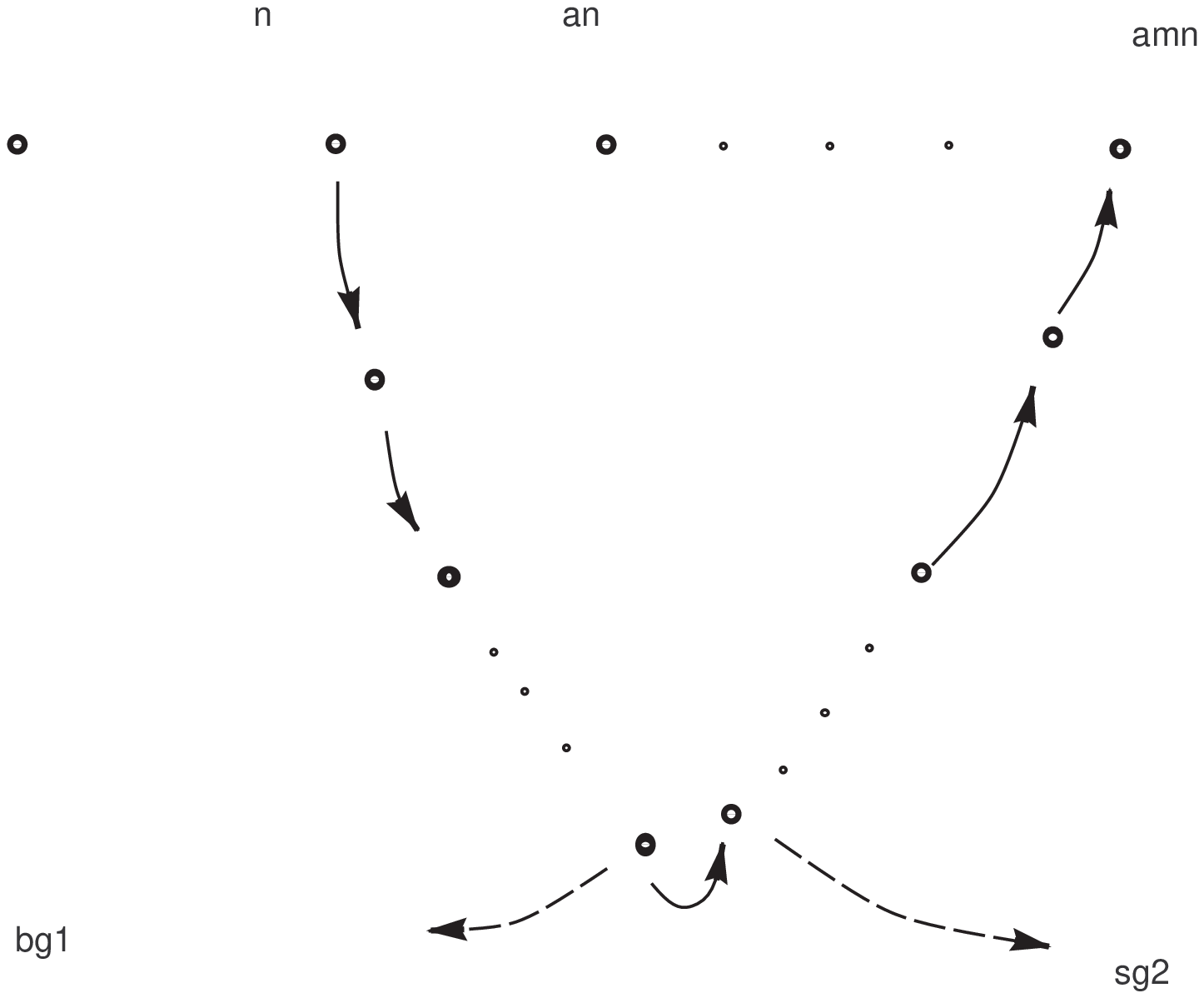}}
\end{center}
{ }
\end{figure}

Let a,b be two fixed point of a homeomorphism $f:S^{1}\to S^{1}$ preserving orientation with $(a,b)\cap Fix (f)=\emptyset$. The interval $(a,b)$ is an $r$-interval if for $x\in (a,b)$ we have that $f^{n}(x)\to a$, $n\to -\infty$ and  $f^{n}(x)\to b$, $n\to +\infty$.
The interval $(a,b)$ is an $l$-interval if for $x\in (a,b)$ we have that $f^{n}(x)\to a$, $n\to +\infty$ and  $f^{n}(x)\to b$, $n\to -\infty$.
Let $[f]=\{f^{m}: \ m\in \N \}$.

The following result is due to Plamenevskaya (see \cite{pl}).

\begin{theorem}\label{teorema_shadowing}
  A homeomorphism $f:S^{1}\to S^{1}$ has the shadowing property if and only if the family $[f]$ contains a homeomorphism such that:
  \begin{enumerate}
    \item $f$ preserve orientation.
    \item The set $Fix(f)$ is nowhere dense and contains at least two points.
    \item For any two $r$-intervals ($l$-intervals) $(a,b)$ and $(c,d)$ there exists $l$-intervals ( correspondingly, $r$-intervals) $(p,q)$ and $(s,t)$ such that $$ (p,q)\subset (b,c) \mbox{  and  } (s,t)\subset (d,a).$$
  \end{enumerate}
\end{theorem}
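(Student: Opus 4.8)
The plan is to first put $f$ into a normal form and then prove the two implications separately. The key preliminary is that, for a homeomorphism $h$ of $S^{1}$, $h$ has the shadowing property if and only if $h^{m}$ does, for every $m\geq 1$: from a $\delta$-pseudotrajectory of $h^{m}$ one interpolates the intermediate iterates to get one for $h$, and conversely sub-sampling every $m$-th term of a $\delta$-pseudotrajectory of $h$ yields a $\delta'$-pseudotrajectory of $h^{m}$ with $\delta'\to0$ as $\delta\to0$, by uniform continuity. This is what makes the ``$[f]$ contains a homeomorphism'' formulation natural: replacing $f$ by $f^{2}$ I may assume orientation is preserved; then, after discarding irrational rotation numbers (minimal circle homeomorphisms and Denjoy maps fail shadowing, since a pseudotrajectory rotating at a slightly perturbed speed drifts off every orbit) and replacing $f$ by $f^{q}$ when the rotation number is a nonzero rational $p/q$, I may assume the rotation number is $0$, so $Fix(f)\neq\emptyset$. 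It then remains to show that such an $f$ has shadowing iff conditions (2) and (3) hold.

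Necessity I would establish by contraposition. If $Fix(f)$ contains an interval $J$, any $\delta$-chain inside $J$ is a pseudotrajectory of diameter up to $|J|$, which no orbit --- constant on $Fix(f)$, monotone on each complementary interval --- can $\varepsilon$-track; hence $Fix(f)$ is nowhere dense. If $Fix(f)=\{p\}$, then $f$ is fixed-point free on $S^{1}\setminus\{p\}$, every forward orbit converges to $p$, and a pseudotrajectory following $f$ away from $p$ and making sub-$\delta$ jumps near $p$ (where $f$ barely moves points) can cycle around $S^{1}$ forever, which no orbit follows; hence $\sharp Fix(f)\geq2$. Since $f(x)-x$ has constant sign on each complementary interval, every such interval is an $r$-interval or an $l$-interval, the $r$-intervals being exactly those along which forward iteration moves points in the positive direction. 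If (3) fails, say two $r$-intervals $(a,b),(c,d)$ with the arc $(b,c)$ containing no $l$-interval, then that arc and the two $r$-intervals capping it carry positive flow with no reversal; I would build a pseudotrajectory following $f$ through the wide intervals and jumping positively by small amounts across the thin fixed-point clusters, so that it drifts monotonically past $b$, through $(b,c)$ and $(c,d)$, and is captured by the first sink it reaches (or cycles the circle if no complementary interval is ever an $l$-interval). Choosing $\varepsilon$ below a quarter of the relevant interval lengths, a shadowing point would have to lie simultaneously in the set of points whose backward orbit tends to one endpoint and in the set whose forward orbit tends to another, which is empty because of the semistable configuration forced by the failure of (3). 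So shadowing forces (1)--(3).

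For sufficiency, assume (1)--(3) and fix $\varepsilon>0$. Only finitely many complementary intervals have length $\geq\varepsilon/8$; these ``large'' intervals organize the dynamics --- on a large $r$-interval forward iteration contracts uniformly toward the right endpoint off a neighborhood of the left one, dually for large $l$-intervals, and, since $Fix(f)$ is nowhere dense, each ``cluster'' of small complementary intervals and fixed points between two consecutive large ones has diameter $o(1)$ as $\delta\to0$. Condition (3) forces the large intervals and the sinks (sitting where an $r$-interval is followed by an $l$-interval) to be interleaved, so a pseudotrajectory cannot run away around the circle. Given a $\delta$-pseudotrajectory $\{x_{n}\}_{n\in\Z}$ with $\delta$ small, I would record at each time which large interval or cluster contains $x_{n}$ and reconstruct the shadowing point as the intersection of a nested sequence of intervals obtained by pulling back, along the contracting branches, the interval occupied at time $n$ --- exactly the mechanism of Lemmas \ref{l1}--\ref{l3}. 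Condition (3) is what makes these pullbacks properly nested with lengths shrinking to $0$, so the intersection is the desired single point.

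The step I expect to be hardest is condition (3). On the necessity side one must make rigorous that its failure destroys shadowing even when $Fix(f)$ is a Cantor set, where ``semistable fixed point'' has to be read off from the types of the complementary intervals accumulating at a point, and the drifting pseudotrajectory must be engineered to move by more than $2\varepsilon$ while remaining a genuine $\delta$-chain --- this needs a quantitative estimate of how slowly $f$ moves near $Fix(f)$. On the sufficiency side, the burden is organizing the infinitely many small intervals into finitely many controlled clusters and running the nested-pullback construction uniformly. The rest is a longer rerun of the north--south-pole arguments already carried out in Section 2.
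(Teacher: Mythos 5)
The paper does not prove this theorem: it is quoted verbatim from Plamenevskaya \cite{pl} and used as a black box, so there is no in-paper argument to compare yours against. Judged on its own, your outline follows the natural (and, as far as the standard references go, the intended) route: pass to $f^{m}$ using the equivalence of shadowing for $f$ and for its powers, eliminate irrational rotation numbers, reduce to rotation number $0$, and then read everything off the structure of $Fix(f)$ and the complementary $r$- and $l$-intervals. The easy parts of necessity (nowhere density of $Fix(f)$, at least two fixed points, failure of shadowing for irrational rotation number) are correctly argued in outline.

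However, as a proof there are genuine gaps, concentrated exactly where the content of the theorem lies, namely condition (3). In the sufficiency direction your key structural claim --- that the ``cluster'' of small complementary intervals and fixed points between two consecutive large complementary intervals has diameter $o(1)$ as $\delta\to 0$ --- is false as stated: these clusters are determined by $f$ and $\varepsilon$ alone, not by $\delta$, and a nowhere dense $Fix(f)$ (e.g.\ a Cantor set) together with its short complementary intervals can span an arc of diameter comparable to $1$. A $\delta$-pseudotrajectory can creep through such a cluster, and showing that condition (3) (the interleaving of $r$- and $l$-intervals) nevertheless pins down a shadowing point is precisely the hard step; your nested-pullback mechanism is only announced, not carried out, and unlike the situation in Section 2 there is no uniform expansion available to make the pullbacks contract. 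Symmetrically, in the necessity direction the construction of a non-shadowable drifting pseudotrajectory when (3) fails along an accumulation of complementary intervals of a single type is described but not executed, and you acknowledge this yourself. So the proposal is a correct reading of the theorem and a reasonable plan, but not a proof; for the purposes of this paper the appropriate resolution is simply the citation of \cite{pl}.
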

%

{\bf{ The set $S^{1}$ is the minimal set.}}\\

We will now prove the Theorem A for the case that $S^{1}$ is the minimal set.

By Theorem \ref{teorema_shadowing}, there exist $m,n\in\Z$ such that the maps $\Phi_{a^{m}}$ and $\Phi_{b^{n}}$ preserve orientation and they have at least two fixed points.

Changing, if necessary, the free group $F_2$ by the free group $F_2^{'}$ generated by $\{a^{m},b^{n}\}$, we can assume that the maps $\Phi_{a}$ and $\Phi_{b}$  preserve orientation and they have at least two fixed points.

Again, by Theorem \ref{teorema_shadowing} there exists a $r$-interval or a $l$-interval $(N,N^{'})$ for $\Phi_a$. Suppose that $(N,N^{'})$ is a $r$-interval.

By contradiction, suppose that $\Phi$ has the shadowing property for $\varepsilon$, $\delta$.
Since $S^{1}$ is a connected invariant set and $\overline{O(N)}=S^{1}$, by the proof of Lemma \ref{lema_general} there exist $g_1, g_2\in G$  such that $$d (\Phi_{bg_{1}}(N), \Phi_{sg_{2}}(N))<\delta \mbox{ with  } s\neq b . $$

Suppose that $\Phi_{bg_{1}}(N)$ is on the left of $\Phi_{sg_{2}}(N)$.

Let's write $g_1=s^{'}_1... s^{'}_t $ and $g_2=s_1...s_ra^{m} $ with $m\in\Z$ and $s_r\neq a$.
Let's define a $\delta$-pseudotrajectory $ \{x_g \} $ as follows:

\begin{itemize}

\item   $x_{a^{n}}=\Phi_{a^{n}}(N)$ for all $n\in\Z$,\\
\item $x_{ s^{'}_t }  =\Phi_{s^{'}_t }(N)$, $x_{s^{'}_{t-1} s^{'}_t }  =\Phi_{s^{'}_{t-1} s^{'}_t }(N),$..., $x_{ g_{1} }  =\Phi_{g_{1}}(N),$

 \item  $x_{ bg_{1} }  =\Phi_{sg_{2}}(N)=\Phi_{ss_1...s_r}(\Phi_{a^{m}}(N)) ,$ (here we use that $d (\Phi_{bg_{1}}(N), \Phi_{sg_{2}}(N))<\delta . $ )

\item $x_{ s^{-1}bg_{1} }  =\Phi_{g_{2}}(N)=\Phi_{s_1...s_r}(\Phi_{a^{m}}(N))$, ( here we use that $s\neq b$)

\item $x_{ s^{-1}_1s^{-1}bg_{1} }  =\Phi_{s_2...s_r}(  \Phi_{a^{m}}(N))$,..., $x_{ s^{-1}_{r-1}...s^{-1}_1s^{-1}bg_{1} }  = \Phi_{s_{r}a^{m}}(N)$ and

\item $x_{ s^{-1}_r...s^{-1}_1s^{-1}bg_{1} }  = y $ such that $y\in (N,N^{'})$ and  $d(y ,N)<\delta$.

\end{itemize}

From here we define the pseudotrajectory dynamically.

Suppose that there exist $z\in S^{1}$ be such that $d( x_g, \Phi_g(z))<\varepsilon$ for all $g\in G$.

Since  $x_{a^{n}}=\Phi_{a^{n}}(N)$ for all $n\in\Z$ and $(N,N^{'})$ is a $r$-interval, then $z\in (N-\varepsilon, N)$. (see figure \ref{figura8}).

Since $\Phi_g$ preserve orientation and $z$ is on the left of $N$, then $\Phi_{bg_{1}}(z)$ is on the left of $\Phi_{bg_{1}}(N)$. Recall that  $\Phi_{bg_{1}}(N)$ is on the left of $\Phi_{sg_{2}}(N)$.

Then $\Phi_{g^{-1}_2s^{-1}bg_{1}}(z)$ is on the left of $\Phi_{g^{-1}_2s^{-1}}(\Phi_{sg_{2}}(N))= N$.

Since $\Phi_{a^{n}}  (   \Phi_{g^{-1}_2s^{-1}bg_{1}}(z))\notin   (N, N^{'})$ for all $n\geq 0$  y $\Phi_{a^{n}}  ( y)\to       N^{'}$ because $(N,N^{'})$ is $r$-interval we obtain a contradiction.

\begin{figure}[h]

\psfrag{sg2}{$\Phi_{sg_{2}}(N)$}

\psfrag{bg1}{$\Phi_{bg_{1}}(N)$}
\psfrag{bbg1}{$\Phi_{bg_{1}}(z)$}
\psfrag{an}{$\Phi_{a}(N)$}

\psfrag{amn}{$N^{'}$}
\psfrag{n}{$N$}

\psfrag{nn}{$N^{'}$}
\psfrag{z}{$z$}

\begin{center}
\caption{\label{figura8}}
\subfigure[]{\includegraphics[scale=0.17]{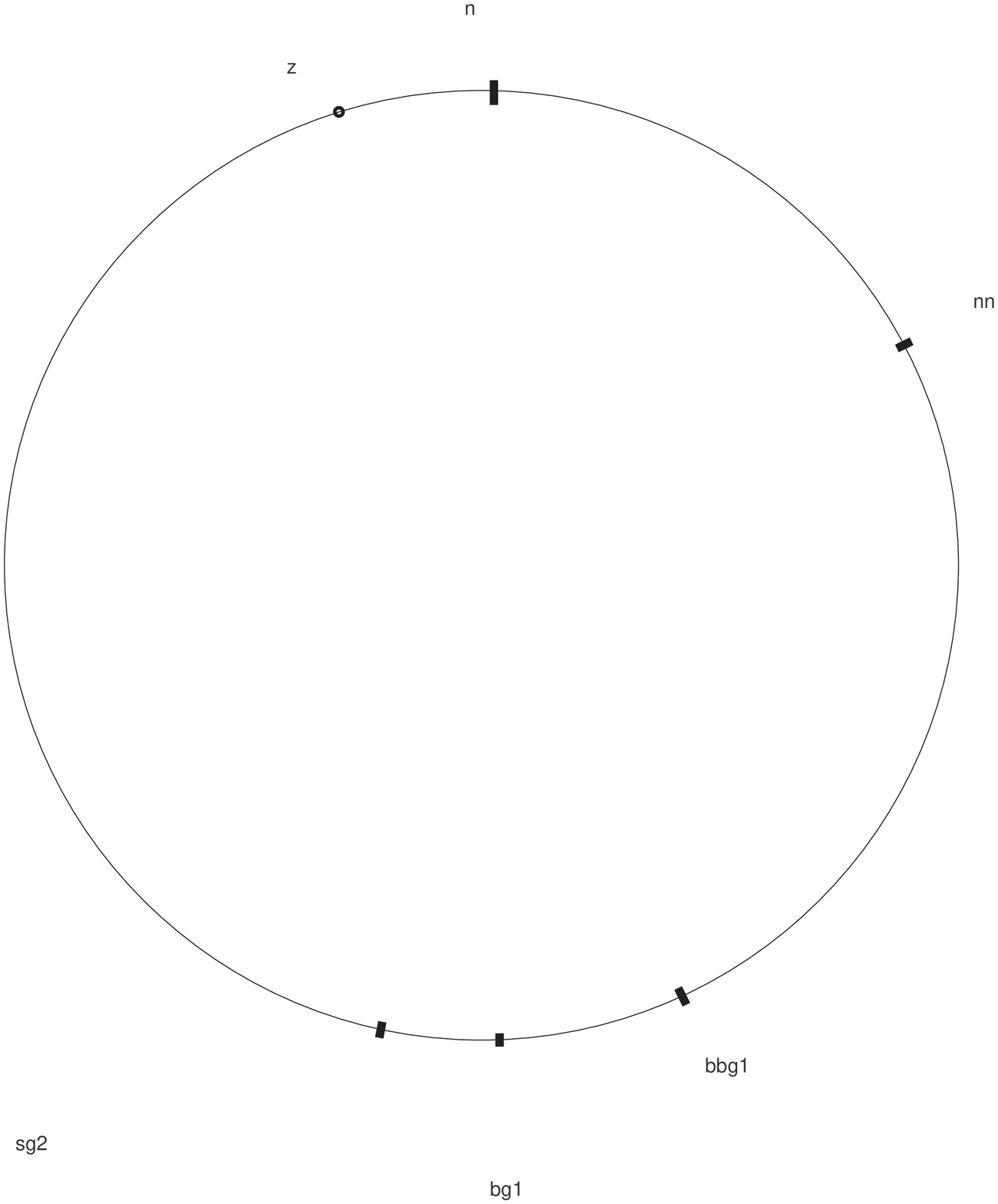}}
\end{center}
{ }
\end{figure}

{\bf{There exists a finite minimal set.}}\\
First we state a lemma and the Theorem A is obtained as corollary.

\begin{lemma}\label{libre}
Let $(F_2,S^{1},\Phi )$ be a dynamical system. Suppose that there exist $g_1,g_2\in F_2$ such that:
\begin{enumerate}
 \item $g^{n}_1\neq g^{m}_2$ for all $n,m\in\Z\setminus \{0\}$ and
 \item there exists $N\in S^{1}$ such that $\Phi_{g_{1}}(N)=\Phi_{g_{2}}(N)=N$.
\end{enumerate}
Then $\Phi$ does not have the shadowing property.
\end{lemma}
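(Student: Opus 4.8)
The idea is to exploit the fact that $N$ is a common fixed point of two "independent" elements $g_1$ and $g_2$ in order to manufacture a bad pseudotrajectory, in the spirit of the construction used in Lemma \ref{lema_general} and in the proof of Theorem A for the case $M = S^1$. First I would observe that since $\Phi$ has the shadowing property iff $\Phi$ restricted to the free subgroup generated by $\{g_1, g_2\}$ does — here one uses that $g_1^n \neq g_2^m$ for $n,m \neq 0$ guarantees $\langle g_1, g_2 \rangle$ is a free group of rank two, together with \cite[Proposition 1]{ot} on independence of the generator — we may as well assume $g_1 = a$ and $g_2 = b$, so that $N$ is a common fixed point of $\Phi_a$ and $\Phi_b$. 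Then $O(N)$ is reduced to the single point $N$ (every generator fixes it), so $N$ is itself an invariant set. The dichotomy for minimal sets on $S^1$ or a direct argument shows $N$ need not be isolated in general — but we do not need Lemma \ref{lema_general} directly; instead we argue by contradiction on the shadowing constants.

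Assume $\Phi$ has shadowing for a given $\varepsilon$ (chosen small, say smaller than any relevant expansivity constant or than a quarter of the circle), with associated $\delta$. The key step is to build a $\delta$-pseudotrajectory $\{x_g\}$ that is "honest" along the $a$-axis, i.e. $x_{a^n} = N$ for all $n \in \mathbb{Z}$ (which is legitimate since $\Phi_a(N) = N$, giving zero error on those transitions), and similarly honest along the $b$-axis, $x_{b^n} = N$, and indeed $x_g = N$ for a large sub-tree of $G$, but which is perturbed off $N$ somewhere — for instance set $x_{h} $ to be a point $y \neq N$ with $d(y,N) < \delta$ for one chosen $h$, and extend dynamically. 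A shadowing point $z$ would then have to satisfy $d(\Phi_{a^n}(z), N) < \varepsilon$ and $d(\Phi_{b^n}(z), N) < \varepsilon$ for all $n$; if $N$ is an expansive point for $\Phi_a$ (or for $\Phi_b$) this forces $z = N$, contradicting $d(x_h, \Phi_h(z)) < \varepsilon$ once $y$ is chosen with $\Phi_h(N) = N \neq y$ and $d(y,N)$ comparable to $\delta$ but the forward images of $y$ under the relevant element eventually escaping an $\varepsilon$-neighborhood. If $N$ is not an expansive point for either $\Phi_a$ or $\Phi_b$, then near $N$ both maps are, after passing to a power, parabolic-or-identity-like on at least one side; the argument of Theorem \ref{teorema_shadowing} and the $r$-interval/$l$-interval analysis used above for the case $M = S^1$ applies verbatim to a small neighborhood of $N$, producing the same escape phenomenon. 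So in every case the pseudotrajectory cannot be $\varepsilon$-shadowed.

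The main obstacle is the case where $N$ fails to be an expansive point for both $\Phi_a$ and $\Phi_b$: there one cannot invoke Remark \ref{rk3} to pin the shadowing point at $N$, and one must instead examine the local dynamics of $\Phi_a$ and $\Phi_b$ near the common fixed point $N$ and argue that the interval structure (which side points are attracted to, which repelled) is incompatible with simultaneously staying $\varepsilon$-close to $N$ along both axes while also accommodating the perturbation $y$. I expect this to require a short case analysis according to whether $N$ is, for each of $\Phi_a$ and $\Phi_b$, attracting on the left, attracting on the right, attracting on both sides, or repelling on both sides (using Theorem \ref{teorema_shadowing} to reduce to orientation-preserving maps with $N \in \mathrm{Fix}$); in each combination one builds the perturbation on the side where at least one of the two maps repels, and the incompatibility of the two fixed-point structures — guaranteed by independence of $g_1,g_2$ — is what ultimately produces the contradiction. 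Once Lemma \ref{libre} is established, Theorem A in the finite-minimal-set case follows immediately: a finite minimal orbit of size $k$ gives, for each point $N$ of the orbit, that $\Phi_{a^{k!}}$ and $\Phi_{b^{k!}}$ both fix $N$, and the non-abelian-ness of $F_2$ furnishes $g_1, g_2$ with $g_1^n \neq g_2^m$, so the hypotheses of the lemma are met.
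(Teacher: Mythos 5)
There is a genuine gap. Your reduction to the case $g_1=a$, $g_2=b$ with a common fixed point $N$ matches the paper's first step, and your argument in the sub-case where $N$ is an expansive point for one of the generators is fine (it is essentially Theorem \ref{teoot}, item 2). But the remaining case is the heart of the lemma, and your sketch of it does not work. You claim that the $r$-interval/$l$-interval analysis from the $M=S^1$ case ``applies verbatim'' near $N$, and that ``the incompatibility of the two fixed-point structures --- guaranteed by independence of $g_1,g_2$'' produces the contradiction. Neither claim holds: the $M=S^1$ argument relies on $\overline{O(N)}=S^1$ to produce $g_1,g_2$ with $\Phi_{bg_1}(N)$ and $\Phi_{sg_2}(N)$ $\delta$-close, which is unavailable here since now $O(N)=\{N\}$; and freeness of $\langle g_1,g_2\rangle$ imposes no constraint whatsoever on the germs of $\Phi_{g_1},\Phi_{g_2}$ at $N$ --- both can be the identity on a neighborhood of $N$, or both attracting from both sides, in which case there is no ``side where at least one of the two maps repels'' and no local escape phenomenon at $N$ at all.

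The paper's proof avoids local analysis at $N$ entirely. It lifts to the universal cover, choosing lifts $\widetilde\Phi_a,\widetilde\Phi_b$ fixing $\widetilde N$, so that $[\widetilde N,\widetilde N+1]$ (and each integer translate) is invariant and every true orbit is bounded. Starting from any $x$, it then builds a pseudotrajectory along a single reduced ray $\cdots s_3s_2s_1$ by alternating letters from $\{a,a^{-1}\}$ and $\{b,b^{-1}\}$ (the alternation keeps the word reduced, which is where freeness enters), at each step picking whichever of $s,s^{-1}$ satisfies $\widetilde\Phi_{s^{\pm1}}(x)\ge x$ --- always possible for an increasing homeomorphism and its inverse --- and then adding $\delta/2$. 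The pseudotrajectory drifts to $+\infty$ at speed at least $\delta/2$ per step, while every genuine orbit stays in a bounded invariant interval: a contradiction requiring no expansivity, no repelling side, and no case analysis. To salvage your approach you would need exactly such a mechanism for accumulating arbitrarily many $\delta$-errors in one direction; the local picture at $N$ on $S^1$ cannot supply it when both generators are locally trivial there.
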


\begin{proof}
 Let $F_2^{'}$ be the subgroup of $F_2$ generated by $g_1,g_2$. We consider the action $\Phi^{'}=\Phi_|{_{F_2^{'}}}$. Note that if the dynamical system $(F^{'}_2,S^{1},\Phi^{'} )$  has not have the shadowing property then $(F_2,S^{1},\Phi )$  has not have the shadowing property.

Since $g^{n}_1\neq g^{m}_2$ for all $n,m\in\Z\setminus \{0\}$ the group $F_2^{'}$ is isomorphic with $F_2$.

 To simplify we write $\Phi^{'}_{a}=\Phi_{g_{1}}$ and $\Phi^{'}_{b}=\Phi_{g_{2}}$.\\
 We will prove that the lift of the dynamical system $(F^{'}_2,S^{1},\Phi^{'} )$  has not have the shadowing property. Thus the dynamical system $(F^{'}_2,S^{1},\Phi^{'} )$  has not have the shadowing property either.

Let $(\Pi , \R )$ be the universal covering of $S^{1}$ with $\Pi: \R \to S^{1}$ such that $\Pi (t)=e^{i2\pi t}$. Let $\widetilde{N}\in \R$ be such that $\Pi(\widetilde{N})=N$ and $\widetilde{\Phi}_a ,\widetilde{\Phi}_b :\R\to\R$, the lift of $\widetilde{\Phi}_a$ and $\widetilde{\Phi}_a$ that fix  $\widetilde{N}$.

Let $(F^{'}_2,\R,\widetilde{\Phi} )$ be the dynamical system where $\widetilde{\Phi}$  is the action generated by  $\widetilde{\Phi}_a$  and  $\widetilde{\Phi}_b$.
Note that $\Phi^{'}$ has the shadowing property iff  $\widetilde{\Phi}$ has the shadowing property.

We will prove that the action $\widetilde{\Phi}$ has not have the shadowing property.

Note that the interval $[ \widetilde{N}, \widetilde{N}+1]$ is invariant for the action $\widetilde{\Phi}$. Then for all $x\in [ \widetilde{N}, \widetilde{N}+1]$, $\overline{O(x)}\subset [ \widetilde{N}, \widetilde{N}+1]$.

Suppose that the action $\widetilde{\Phi}$ has the  shadowing property for $\varepsilon, \delta$.

Recall that $\widetilde{\Phi}_a$ and $\widetilde{\Phi}_b$ are increasing functions.

Given $x\in [ \widetilde{N}, \widetilde{N}+1]$, consider a $\delta$-pseudotrajectory as follows:

$x_e=x$.

We have two possibilities: $\widetilde{\Phi}_a(x_e)\geq x_e$ or $\widetilde{\Phi}_{a^{-1}}(x_e)\geq x_e$. Hence there exists $s_1\in \{a,a^{-1} \}$ such that $\widetilde{\Phi}_{s_{1}}(x_e)\geq x_e$
Thus, define  $$x_{s_{1}}     =   \widetilde{\Phi}_{s_{1}}(x_e)+\frac{\delta}{2} \geq x_e +\frac{\delta}{2}\cdot$$

Now consider $x_{s_{1}}$.
We have two possibilities: $\widetilde{\Phi}_b(x_{s_{1}})\geq x_{s_{1}}$ or $\widetilde{\Phi}_{b^{-1}}(x_{s_{1}})\geq x_{s_{1}}$. Hence there exists $s_2\in \{b,b^{-1} \}$ such that $\widetilde{\Phi}_{s_{2}}(x_{s_{1}})\geq x_{s_{1}}$
Thus, define $$x_{ {s_{2}}s_{1}}     =   \widetilde{\Phi}_{s_{2}}(x_{s_{1}})+\frac{\delta}{2}\geq  x_{s_{1}}   +\frac{\delta}{2} \geq x_e +\delta\cdot  $$
Again, consider $x_{ {s_{2}}s_{1}} $ and the maps $\widetilde{\Phi}_a $ and $ \widetilde{\Phi}_{a^{-1}}$ to define the point $x_{ {s_{3}}{s_{2}}s_{1}} $. Clearly
$$x_{ {s_{n}}...{s_{2}}s_{1}}  \underset{n\to +\infty}{\longrightarrow} +\infty .$$  Since $\overline{O(x)}\subset [ \widetilde{N}, \widetilde{N}+1]$, then $\widetilde{\Phi}$ does not have the shadowing property.

\end{proof}

\begin{coro}\label{final}
 Let $(F_2,S^{1},\Phi )$ be a dynamical system. If there exists a finite minimal set, then $\Phi$ does not have the shadowing property.
\end{coro}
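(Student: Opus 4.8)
The plan is to deduce Corollary~\ref{final} directly from Lemma~\ref{libre} by a group-theoretic argument on the stabilizer of a point of the finite minimal set. Let $M$ be a finite minimal set and fix $N\in M$. Since $M$ is minimal and finite, $\overline{O(N)}=M$ forces $O(N)=M$, so the orbit of $N$ under $F_2$ is finite, of cardinality $|M|$. Hence the stabilizer $H=\{g\in F_2:\ \Phi_g(N)=N\}$ is a subgroup of finite index $[F_2:H]=|M|$ in $F_2$.

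The next step is to exhibit inside $H$ a pair of elements meeting the hypotheses of Lemma~\ref{libre}. Since $F_2$ is free of rank $2$ and $H$ has finite index, the Nielsen--Schreier theorem gives that $H$ is itself free, and the index--rank formula yields $\mathrm{rk}(H)=1+[F_2:H](2-1)=1+|M|\ge 2$. I would then choose a free basis of $H$ and take $g_1,g_2$ to be two distinct elements of it. Then $\langle g_1,g_2\rangle$ is free of rank $2$; in particular $g_1^n$ and $g_2^m$ are distinct reduced words whenever $n,m\in\Z\setminus\{0\}$, so $g_1^n\neq g_2^m$ for all such $n,m$, which is condition (1). Moreover $\Phi_{g_1}(N)=\Phi_{g_2}(N)=N$ since $g_1,g_2\in H$, which is condition (2).

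With these $g_1,g_2$ both hypotheses of Lemma~\ref{libre} hold, and the lemma immediately gives that $\Phi$ does not have the shadowing property, completing the proof. The degenerate case $|M|=1$ is already covered by this scheme (then $N$ is a common fixed point of $\Phi_a$ and $\Phi_b$, and one applies Lemma~\ref{libre} with $g_1=a$, $g_2=b$), but it also fits the general argument since $\mathrm{rk}(H)=1+1=2$ there as well.

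The only non-routine ingredient I expect is the invocation of Nielsen--Schreier together with the rank formula for finite-index subgroups of a free group; the reduction of the orbit to the stabilizer and the verification of the two hypotheses of Lemma~\ref{libre} are straightforward. A minor point worth stating carefully is that Lemma~\ref{libre} genuinely requires $g_1^n\neq g_2^m$ for all nonzero $n,m$ (not merely that $g_1,g_2$ generate a rank-$2$ subgroup), and this is exactly what choosing two elements of a free basis of $H$ guarantees.
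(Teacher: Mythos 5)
Your proof is correct and follows the same route as the paper: reduce to a point of the finite minimal set that is fixed by two elements generating a rank-two free subgroup, then invoke Lemma~\ref{libre}. In fact your stabilizer argument via Nielsen--Schreier and the index--rank formula supplies exactly the justification that the paper's terse ``without loss of generality we can assume $M=\{N\}$ and $\Phi_g(N)=N$ for all $g\in F_2$'' leaves implicit (that reduction only makes sense after passing to a finite-index subgroup, which is what you do), so your write-up is the more complete of the two.
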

\begin{proof}
Let $M$ be a finite minimal set for $(F_2,S^{1},\Phi )$. Without loss of generality we can assume that $ M = \{N \} $ and $\Phi_g(N)=N$ for all $g\in F_2$. Thus, by Lemma \ref{libre} $\Phi$ does not have the shadowing property.
\end{proof}

To end we give a simple example that show that Lemma \ref{libre} and Corollary \ref{final} are not equivalent. Let $f:S^1\to S^1$ be a irrational rotation. Consider  $\Phi_a=f$, $\Phi_b=f^{-1}$ and $\Phi$ the action generated by $\Phi_a$ and $\Phi_b$. Let $g_1=ab$ and $g_2=ba$.
Thus $g^n_1\neq g_2^m$ for all $m,n\in \Z\setminus \{0\}$ and $\Phi_{g_{1}}=\Phi_{g_{2}}=Id$. Hence for all $N\in   S^1$,  $\Phi_{g_{1}}(N)=\Phi_{g_{2}}(N)=N$ and  $S^1$ is the minimal set for the action $\Phi$.


\begin{thebibliography}{99}

%
%
%
%


\bibitem[N]{n} (MR2394157)
\newblock A. Navas,
\newblock \emph{Groups of circle diffeomorphisms. Translation of the 2007 Spanish edition.},
\newblock  Chicago Lectures in Mathematics. University of Chicago Press, Chicago, IL, 2011. xviii+290 pp. ISBN: 978-0-226-56951-2; 0-226-56951-9.



\bibitem[OT]{ot} (MR1944405) [10.1017/S0143385702000512]
\newblock Osipov, A., Tikhomirov , S,
\newblock \doititle{Shadowing for actions of some finitely
generated groups.  },
\newblock \emph{ Dyn. Syst.,} \textbf{29}, no. 3, (2014), 337--351.


\bibitem[P]{p} (MR1944405) [10.1017/S0143385702000512]
\newblock Pilyugin, S,
\newblock \doititle{Theory of pseudo-orbit shadowing in dynamical systems.  },
\newblock \emph{ Differ. Equ.,} \textbf{47}, no. 13, (2011), 1929--1938.


\bibitem[Pl]{pl} (MR1944405) [10.1017/S0143385702000512]
\newblock Plamenevskaya, O. B.,
\newblock \doititle{Pseudo-orbit tracing property and limit shadowing property on a circle.  },
\newblock \emph{ Vestnik St. Petersburg Univ. Math,} \textbf{30}, no. 1, (1997), 27--30.








\end{thebibliography}
\end{document}